\numberwithin{equation}{section}
\newtheorem{proposition}{Proposition}[section]
\newtheorem{lemma}[proposition]{Lemma}
\newtheorem{theorem}[proposition]{Theorem}
\newtheorem*{theorem*}{Theorem}
\newtheorem{corollary}[proposition]{Corollary}
\theoremstyle{definition}
\newtheorem{remark}[proposition]{Remark}
\newcommand{\Addresses}{{
  \bigskip
  \footnotesize

F. Arias: \textsc{}\par\nopagebreak
  \textit{E-mail address}: \texttt{farias@utb.edu.co\\
  Facultad de Ciencias Básicas\\
Universidad Tecnológica de Bolívar-Colombia}
}}
\newcommand{\Addressestwo}{{
  \bigskip
  \footnotesize

J. Borja: \textsc{}\par\nopagebreak
  \textit{E-mail address}: \texttt{jersonborjas@correo.unicordoba.edu.co\\
  Departamento de Matemáticas y Estadística\\
Universidad de Córdoba-Colombia}
}}
\newcommand{\Addressesthree}{{
  \bigskip
  \footnotesize

L. Rubio: \textsc{}\par\nopagebreak
  \textit{E-mail address}: \texttt{lrubiohernandez@correo.unicordoba.edu.co\\
  Departamento de Matemáticas y Estadística\\
Universidad de Córdoba-Colombia}
}}
\title{Counting integers representable as images of polynomials modulo $n$}
\author{Fabián Arias, Jerson Borja, Luis Rubio}
\date{\today}
\begin{document}
\maketitle

\begin{abstract}
    Given a polynomial $f(x_1,x_2,\ldots, x_t)$ in $t$ variables with integer coefficients and a positive integer $n$, let $\alpha(n)$ be the number of integers $0\leq a<n$ such that the polynomial congruence $f(x_1, x_2, \ldots, x_t)\equiv a\ (mod\ n)$ is solvable. We describe a method that allows to determine the function $\alpha$ associated to polynomials of the form $c_1x_1^k+c_2x_2^k+\cdots+c_tx_t^k$. Then we apply this method to polynomials that involve sums and differences of squares, mainly to the polynomials $x^2+y^2, x^2-y^2$ and $x^2+y^2+z^2$. 
\end{abstract}

\section{introduction}

For a polynomial $f(x_1,x_2,\ldots, x_t)$ in $t$ variables with integer coefficients, consider the polynomial congruence 
\begin{equation}\label{eqpolycon}
    f(x_1, x_2, \ldots, x_t)\equiv a\ (mod\ n)
\end{equation}
where $n$ is a positive integer and $a$ is an integer. Since the congruence (\ref{eqpolycon}) has solution for $a$ if and only if it has solution for $a+qn$ for any integer $q$, we can assume that $a$ belongs to a complete residue system modulo $n$. We will use the system of residues $I_n=\{0,1,,\ldots, n-1\}$.

For any positive integer $n$, we set $A_n$ to be the set of all $a\in I_n$ for which (\ref{eqpolycon}) has solution. We define $\alpha(n)=|A_n|$, where $|A_n|$ stands for the size of $A_n$. The following natural questions about these sets $A_n$ and their sizes $\alpha(n)$ guide our work:

\begin{enumerate}
    \item Give explicit descriptions of $A_n$ for all $n$.
    \item Find a formula for $\alpha(n)$. 
    \item Determine or describe all the values of $n$ such that $\alpha(n)=n$. This is equivalent to determine when the polynomial $f(x_1,x_2,\ldots, x_t)$ is surjective when it is considered as a map $f:\mathbb Z_n^t\to \mathbb Z_n$. When this map is surjective, we will say that $f(x_1,x_2,\ldots, x_t)$ is \textit{surjective on} $n$.
\end{enumerate}

Some results related to these questions with respect to the polynomials $x^2+y^2$ and $x^3+y^3$ are found in \cite{broughan, burns, harrington}.

In \cite{harrington}, it is solved the problem of characterizing all positive integers $n$ such that every element in the ring $\mathbb Z_n$ can be represented as the sum of two squares in $\mathbb Z_n$, or, in our terms, that $x^2+y^2$ is surjective on $n$. Such integers $n$ are those that satisfy the two conditions
\begin{enumerate}
    \item[(i)] $n\not\equiv 0\ (mod\ 4)$ and 
    \item[(ii)] $n\not \equiv 0\ (mod\ p^2)$ for any prime $p\equiv 3\ (mod\ 4)$ with $n\equiv 0\ (mod\ p)$.
\end{enumerate}
In \cite{harrington} it is also solved the problem of finding all positive integers $n$ such that every element in $\mathbb Z_n$ is expressible as a sum of two squares without allowing zero as a summand. Our interest is on the case where zero is allowed as a summand because in that case the sizes $\alpha(n)$ define a multiplicative function.

In \cite{burns} it is considered the general problem of representing elements of $\mathbb Z_n$ as the sum of two squares. It is also determined the sizes $\alpha(n)$ of sets $A_n$ associated to the polynomial $x^2+y^2$. The numbers $\alpha(n)$ define a multiplicative function, which implies that for finding $\alpha(n)$ for all positive integers $n$, it suffices to find $\alpha(p^n)$ where $p$ is prime and $n\geq 1$. This is done in \cite{burns} by giving first an explicit description of sets $A_{p^n}$ and then making a direct calculation of the size of $A_{p^n}$.

Formulas for the numbers $\alpha(n)$ associated to the polynomial $x^3+y^3$ are found in \cite{broughan}. There, it is considered the fraction $\delta(n)=\alpha(n)/n$ instead of $\alpha(n)$. There is no explicit description of the sets $A_{p^n}$ associated to $x^3+y^3$, but some properties of $\delta(n)$ give essentially recursive ways of finding $\delta(n)$.

For a general polynomial $f(x_1,x_2,\ldots, x_t)$, if every nonnegative integer is of the form $f(x_1,x_2,\ldots, x_t)$, then $\alpha(n)=n$ for every $n\geq 1$. This is the case for some polynomials as $x^2+y^2+z^2+w^2$ or $x^2+y^2-z^2$. There are theorems that establish that all nonnegative numbers are of the form $f(x_1,x_2,\ldots, x_t)$ for some specific polynomials. Three of these theorems that are important for us are the following.

\begin{theorem}\label{teosumsquares}
(Euler) A positive integer $n$ is expressible as a sum of two squares if and only if each prime of the form $4k+3$ appears to an even exponent in the prime decomposition of $n$. 
\end{theorem}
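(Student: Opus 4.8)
The plan is to split the biconditional into its two implications and to reduce everything to the representability of single primes, the essential tool being that the set $S$ of integers expressible as a sum of two squares is closed under multiplication. This closure is the Brahmagupta--Fibonacci identity
$$(a^2+b^2)(c^2+d^2)=(ac-bd)^2+(ad+bc)^2,$$
which I would verify by direct expansion. With this in hand, the proof organizes itself around the three congruence classes of primes: $2=1^2+1^2$ lies in $S$; a prime $q\equiv 3\pmod 4$ lies outside $S$ (since squares are $\equiv 0,1\pmod 4$, a sum of two squares is never $\equiv 3$); and a prime $p\equiv 1\pmod 4$ lies in $S$, which is the one nontrivial input.

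For the sufficiency direction I would write $n=2^{a_0}\prod_i p_i^{e_i}\prod_j q_j^{2f_j}$, where the $p_i\equiv 1$ and the $q_j\equiv 3\pmod 4$, the hypothesis guaranteeing that each $q_j$ occurs to an even power. Each factor is then a sum of two squares: $2\in S$, each $q_j^{2f_j}=(q_j^{f_j})^2+0^2\in S$, and each $p_i\in S$ by the prime case. Closure of $S$ under multiplication then yields $n\in S$. To prove that $p\equiv 1\pmod 4$ belongs to $S$, I would first invoke Euler's criterion to produce $u$ with $u^2\equiv -1\pmod p$ (valid precisely because $(p-1)/2$ is even), and then apply a pigeonhole argument (Thue's lemma): among the $(\lfloor\sqrt p\rfloor+1)^2>p$ pairs $(r,s)$ with $0\le r,s\le\lfloor\sqrt p\rfloor$, two give the same value of $r-us\pmod p$; subtracting them produces $(x,y)\neq(0,0)$ with $x\equiv uy\pmod p$ and $0<x^2+y^2<2p$. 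Since $x^2+y^2\equiv (u^2+1)y^2\equiv 0\pmod p$, the only possibility is $x^2+y^2=p$.

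For the necessity direction, suppose $n=a^2+b^2$ and let $q\equiv 3\pmod 4$ be a prime dividing $n$. Setting $d=\gcd(a,b)$ and $m=n/d^2=(a/d)^2+(b/d)^2$, the two summands of $m$ are coprime. I claim $q\nmid m$: if $q$ divided $m$ then, as $q$ cannot divide both coprime summands, one could invert the nonzero one modulo $q$ to obtain $w$ with $w^2\equiv -1\pmod q$, contradicting Euler's criterion since $q\equiv 3\pmod 4$ forces $-1$ to be a nonresidue. Hence the entire power of $q$ in $n$ comes from $d^2$, so the exponent of $q$ in $n$ is even.

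The main obstacle is the representability of primes $p\equiv 1\pmod 4$; the multiplicative identity, the congruence obstruction for $q\equiv 3\pmod 4$, and the coprime reduction in the necessity argument are all routine once that case is secured, and the Euler-criterion computation of $\bigl(\tfrac{-1}{p}\bigr)=(-1)^{(p-1)/2}$ is what ties both directions together.
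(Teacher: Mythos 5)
Your proposal is a correct and complete proof of the two-squares theorem. Note, however, that the paper does not prove this statement at all: it is quoted as classical background (attributed to Euler) and then used as an input elsewhere, e.g.\ in Lemma 2.9 and Propositions 3.3 and 3.4, so there is no internal argument to compare yours against. The route you take is the standard textbook one, and every essential step is in place: the Brahmagupta--Fibonacci identity gives closure of $S$ under multiplication; the mod-$4$ obstruction excludes primes $q\equiv 3\pmod{4}$; Euler's criterion plus the Thue/pigeonhole argument produces $x^2+y^2=p$ for $p\equiv 1\pmod{4}$ (your bound $0<x^2+y^2<2p$ is strict because $p$, being prime, is not a perfect square, so $|x|,|y|\le\lfloor\sqrt{p}\rfloor<\sqrt{p}$); and the coprime reduction $n=d^2m$ with $q\nmid m$ forces the exponent of $q$ in $n$ to be even. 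One small wording repair in the necessity step: you should invert the summand \emph{not divisible by} $q$ rather than ``the nonzero one'' --- by coprimality of the two summands of $m$, if $q$ divided one of them it would divide both, so in fact $q$ divides neither, and inverting either summand yields $w$ with $w^2\equiv -1\pmod{q}$, the desired contradiction.
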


\begin{theorem}\label{teosumthree}
(Gauss-Legendre) An nonnegative integer is the sum of three squares if and only if it is not of the form $4^a(8b+7)$. 
\end{theorem}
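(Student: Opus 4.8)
The statement is an equivalence, so the plan is to treat the two directions separately; the necessity (the ``only if'') is elementary, while the sufficiency (the ``if'') carries all the real weight.

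For necessity I would argue by a congruence-plus-descent argument. First I would record that a perfect square is congruent to $0$, $1$, or $4$ modulo $8$, so a sum of three squares can only realize the residues obtained by adding three elements of $\{0,1,4\}$ modulo $8$; a short enumeration shows these are exactly $\{0,1,2,3,4,5,6\}$, so $7$ is never attained. Hence no integer $\equiv 7 \pmod 8$ is a sum of three squares. To handle the factor $4^a$, I would show that if $n \equiv 0 \pmod 4$ and $n = x^2+y^2+z^2$, then reducing modulo $4$ and using that squares are $0$ or $1$ modulo $4$ forces $x,y,z$ all even; dividing by $4$ then gives a representation of $n/4$. Iterating this descent reduces any $n = 4^a(8b+7)$ to the case $8b+7 \equiv 7 \pmod 8$, which we have just excluded, proving that numbers of the forbidden shape are not sums of three squares.

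For sufficiency I would first reduce to the squarefree case: writing $n = s^2 m$ with $m$ squarefree, a representation of $m$ yields one of $n$, and a short check on $2$-adic valuations shows that if $n$ is \emph{not} of the form $4^a(8b+7)$ then its squarefree part $m$ satisfies $m \not\equiv 7 \pmod 8$. It therefore suffices to represent every squarefree $m$ with $m \equiv 1,2,3,5,6 \pmod 8$. From here I would follow the classical local--global route: by the Hasse--Minkowski theorem, the form $x^2+y^2+z^2$ represents $m$ over $\mathbb{Q}$ if and only if it does so over $\mathbb{R}$ and over every $\mathbb{Q}_p$; positivity handles the real place, isotropy makes the odd primes automatic, and a Hilbert-symbol computation at $p=2$ shows that the only $2$-adic obstruction is precisely $m \equiv 7 \pmod 8$. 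Finally I would upgrade the rational representation to an integral one via the Davenport--Cassels lemma, which applies because $x^2+y^2+z^2$ satisfies the relevant approximation (``Euclidean'') property.

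The hard part is entirely in the sufficiency direction, and it also admits a more elementary packaging that leans on the already-stated Theorem~\ref{teosumsquares}: one seeks an integer $z$ with $n-z^2>0$ equal to a prime $p\equiv 1 \pmod 4$, or to $2p$ with such a $p$, so that Euler's criterion makes $n-z^2$ a sum of two squares and hence $n$ a sum of three. Producing such a $z$ requires choosing a suitable arithmetic progression for $z$ (dictated by $n \bmod 8$) and invoking Dirichlet's theorem on primes in arithmetic progressions to guarantee the needed prime. Either way, the genuine obstacle is the same: the naive congruence analysis only certifies solvability modulo each prime power, and bridging from these local data to an actual integral representation is exactly where a nontrivial input---Hasse--Minkowski together with Davenport--Cassels, or Dirichlet's theorem---is unavoidable.
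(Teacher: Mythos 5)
The paper never proves this statement: Theorem~\ref{teosumthree} is quoted, with attribution to Gauss--Legendre, as classical background, and it is used later only as a black box (in Lemma~\ref{lmexponent}(4), in the discussion of the multiplication table modulo $8$, and in Lemma~\ref{lmpm-cp^2}). So there is no internal proof to compare against, and your proposal must be judged on its own merits. Your main line is the standard correct proof. The necessity direction is sound: squares lie in $\{0,1,4\}$ modulo $8$, three of them never sum to $7$, and the mod-$4$ descent (a sum of three squares divisible by $4$ forces all three variables even) reduces $4^a(8b+7)$ to the excluded residue class. The sufficiency reduction to squarefree $m$ is also fine --- writing $s=2^tu$ with $u$ odd and $u^2\equiv 1 \pmod 8$ shows the squarefree part of an admissible $n$ satisfies $m\not\equiv 7\pmod 8$ --- and the route via Hasse--Minkowski (the form is isotropic, hence universal, over $\mathbb{Q}_p$ for odd $p$; the only $2$-adic excluded square class is $-(\mathbb{Q}_2^\times)^2$, i.e.\ $4^a(8b+7)$) followed by the Davenport--Cassels lemma (which applies since rounding to nearest integers gives distance squared at most $3/4<1$) is exactly Serre's proof and is complete at the level of detail expected of a sketch.

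One genuine flaw: your ``more elementary packaging'' misstates how Dirichlet's theorem enters. You cannot choose $z$ in an arithmetic progression and then ``invoke Dirichlet'' to make $n-z^2$ prime: $n-z^2$ is quadratic in $z$, and Dirichlet's theorem produces primes only in the values of linear forms; no known theorem guarantees prime values of a quadratic polynomial. The actual Dirichlet-based elementary proof (Ankeny's) runs in the opposite order: one first selects an auxiliary prime $p$ in a progression determined by congruence and quadratic-residue conditions depending on $n$, and then extracts the representation by a geometry-of-numbers argument (Minkowski's lattice-point theorem), not by hoping $n-z^2$ lands on a prime. Since you offer this only as an alternative aside, it does not undermine your main argument, but as written that paragraph is not a proof and should either be corrected to Ankeny's scheme or dropped.
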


\begin{theorem}
(Lagrange) Every nonnegative integer is expressible as the sum of four squares.
\end{theorem}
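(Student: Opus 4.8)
The plan is to prove Lagrange's theorem by the classical two-stage route: first reduce from all integers to the primes using the multiplicativity of the property ``is a sum of four squares'', and then settle the prime case by a pigeonhole argument followed by a Fermat-style descent. To this end I would first record \emph{Euler's four-square identity},
\begin{align*}
(a_1^2+a_2^2+a_3^2+a_4^2)(b_1^2+b_2^2+b_3^2+b_4^2)
&= (a_1b_1+a_2b_2+a_3b_3+a_4b_4)^2 \\
&\quad + (a_1b_2-a_2b_1+a_3b_4-a_4b_3)^2 \\
&\quad + (a_1b_3-a_2b_4-a_3b_1+a_4b_2)^2 \\
&\quad + (a_1b_4+a_2b_3-a_3b_2-a_4b_1)^2,
\end{align*}
which exhibits the product of two sums of four squares again as a sum of four squares. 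Hence the set of representable integers is closed under multiplication, and since $0$, $1=1^2$ and $2=1^2+1^2$ are representable while every integer is a product of primes, it suffices to prove that each odd prime $p$ is a sum of four squares.

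For the seed step I would show that some multiple $mp$ with $0<m<p$ is a sum of four squares. The two sets $\{\,x^2 \bmod p : 0\le x\le (p-1)/2\,\}$ and $\{\,-1-y^2 \bmod p : 0\le y\le (p-1)/2\,\}$ each contain $(p+1)/2$ residues, so their total size exceeds $p$ and they must overlap; this produces $x,y$ with $x^2+y^2+1\equiv 0 \pmod p$, whence $x^2+y^2+1^2+0^2=mp$ with $0<m<p$ once $x,y$ are taken in $[0,(p-1)/2]$.

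The descent is the crux. Let $m$ be least with $mp=x_1^2+x_2^2+x_3^2+x_4^2$ a sum of four squares, and suppose $m>1$. If $m$ is even, then $mp$ is even, so an even number of the $x_i$ are odd and I may pair them by parity so that $(x_1\pm x_2)/2$ and $(x_3\pm x_4)/2$ are integers; their squares sum to $\tfrac{m}{2}p$, contradicting minimality. If $m$ is odd, choose $y_i\equiv x_i \pmod m$ with $|y_i|\le (m-1)/2$, so that $y_1^2+\cdots+y_4^2=mr$ with $0\le r<m$; here $r\ne 0$, since $r=0$ would force $m\mid x_i$ for all $i$, hence $m\mid p$, impossible for $1<m<p$. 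Feeding the representations of $mp$ and $mr$ into Euler's identity (with $a_i=x_i$, $b_i=y_i$) yields a representation of $m^2rp$ in which each of the four terms is divisible by $m$, because each reduces modulo $m$ to an expression that vanishes once $y_i\equiv x_i$ is used. Dividing by $m^2$ displays $rp$ as a sum of four squares with $0<r<m$, contradicting minimality; hence $m=1$ and $p$ is a sum of four squares.

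The main obstacle is the bookkeeping inside the descent: verifying that all four quantities produced by Euler's identity are genuinely divisible by $m$, and organizing the even and odd cases so that the resulting $r$ lies strictly between $0$ and $m$ (ruling out both $r=0$ and $r=m$). Once the identity and the pigeonhole count are in hand, the remaining manipulations are routine.
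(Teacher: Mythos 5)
Your proposal is correct, but note that the paper itself offers no proof of this statement: Lagrange's theorem is quoted there as classical background (alongside the Euler and Gauss--Legendre theorems) solely to conclude that $x_1^2+\cdots+x_t^2$ is surjective on every $n$ when $t\geq 4$, so there is no argument in the paper to compare against. What you give is the standard classical proof --- Euler's four-square identity for multiplicativity, the pigeonhole count on the $(p+1)/2$ residues $x^2$ and the $(p+1)/2$ residues $-1-y^2$ to get $mp=x^2+y^2+1^2+0^2$ with $0<m<p$, and Fermat descent --- and all the delicate points check out: in the even case, an even number of the $x_i$ are odd, so after reordering one may assume $x_1\equiv x_2$ and $x_3\equiv x_4\pmod 2$, giving the representation of $\tfrac{m}{2}p$; in the odd case the bound $|y_i|\leq (m-1)/2$ gives $\sum y_i^2\leq (m-1)^2<m^2$, so $r<m$ holds \emph{strictly} (the troublesome $r=m$ can only arise when $m$ is even and some $|y_i|=m/2$, which your case split avoids), $r=0$ is excluded exactly as you say, and each of the four terms of Euler's identity with $b_i=y_i\equiv x_i\pmod m$ is indeed divisible by $m$ (the first is $\equiv\sum x_i^2\equiv 0$, the others are alternating and vanish termwise modulo $m$). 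The proof is complete and self-contained; it would serve as a valid proof of the theorem the paper merely cites.
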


So, if $t\geq 4$, the polynomial $x_1^2+x_2^2+\cdots+x_t^2$ is surjective on $n$ for all $n\geq 1$. We are interested in what happens with the polynomials $x^2+y^2$ and $x^2+y^2+z^2$.

For a general polynomial $f(x_1,x_2,\ldots, x_t)$, the associated sizes $\alpha(n)$ define a multiplicative function. Then, for determining $\alpha(n)$ for all $n$, it suffices to determine $\alpha(p^n)$ for any prime number $p$ and $n\geq 1$. Thus, we focus on studying the sets $A_{p^n}$ where $p$ is a prime number and $n\geq 1$.

We prove some structural results that give us tools to find recurrence formulas for $\alpha(p^n)$ for polynomials of the form $c_1x_1^k+c_2x_2^k+\cdots+c_tx_t^k$. Then, we apply this results to find explicit formulas for the associated function to the polynomials $x^2+y^2, x^2-y^2$ and $x^2+y^2+z^2$. With our method, we deduce the results related to $x^2+y^2$ already found in \cite{burns}.

The polynomials $x^2-y^2$ and $x^2+y^2+z^2$ share the following property: if $n=2^sm$ where $s\geq 0$ and $m$ is odd, then $\alpha(n)=\alpha(2^s)m$.

In the case of $x^2-y^2$ we find that $\alpha(2)=2$ and $\alpha(2^s)=3\cdot 2^{s-2}$ for $s\geq 2$. In particular, $x^2-y^2$ is surjective on $n$ if and only if $n\not\equiv 0\ (mod\ 4)$.

For the polynomial $x^2+y^2+z^2$ we find the explicit formula 
\[\alpha(2^s)=
\begin{cases}
\frac{1}{3}(5\cdot 2^{s-1}+1), &\text{if $s$ is odd,}\\
\frac{2}{3}(5\cdot 2^{s-2}+1), &\text{if $s$ is even.}
\end{cases}
\]
and in particular, $x^2+y^2+z^2$ is surjective on $n$ if and only if $n\not\equiv 0\ (mod\ 8)$.


\section{The multiplicative family associated to a polynomial}

In general we can consider a family of nonempty sets $\{A_n\}_{n\in \mathbb Z^+}$ where $A_n\subseteq I_n$ for all positive integers $n$. We define the associated function $\alpha:\mathbb Z^+\to \mathbb Z^+$ by $\alpha(n)=|A_n|$ for all $n$. Note that $\alpha(1)=1$. The first thing we do is to define general adequate conditions on the family $\{A_n\}_n$ so that the associated function $\alpha$ is multiplicative.  

\subsection{Multiplicative families and polynomials}

If $n$ and $m$ are integers such that $1\leq m\leq n$, we define
\begin{align*}
A_n(m)&:=\{s\in I_n: s\equiv a\ (mod\ m)\text{ for some }a\in A_m\}\\
            &=\{a+jm:a\in A_m, 0\leq j<n/m\}.
\end{align*}

We call the family $\{A_n\}_n$ \emph{multiplicative}, if whenever $n=m_1m_2$ with $m_1$ and $m_2$ relatively prime, the equality $A_n=A_n(m_1)\cap A_n(m_2)$ holds. This condition on the family of sets $\{A_n\}_{n}$ guaranties that the associated function $\alpha$ is multiplicative. 

\begin{lemma}\label{lmmultiplicative}
If $\{A_n\}_{n}$ is a multiplicative family, then the associated function $\alpha$ is multiplicative. 
\end{lemma}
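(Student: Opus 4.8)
The plan is to reduce the entire statement to the Chinese Remainder Theorem. Fix coprime positive integers $m_1,m_2$ and set $n=m_1m_2$. Since $\alpha(k)=|A_k|$ and the multiplicativity hypothesis supplies the identity $A_n=A_n(m_1)\cap A_n(m_2)$, proving $\alpha(n)=\alpha(m_1)\alpha(m_2)$ amounts to establishing the cardinality equality $|A_n(m_1)\cap A_n(m_2)|=|A_{m_1}|\cdot|A_{m_2}|$. Everything will follow once the two sets $A_n(m_1)$ and $A_n(m_2)$ are transported along the CRT bijection.

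First I would introduce the reduction map $\phi:I_n\to I_{m_1}\times I_{m_2}$ sending $s\mapsto(s\bmod m_1,\,s\bmod m_2)$. Because $\gcd(m_1,m_2)=1$ and $|I_n|=m_1m_2=|I_{m_1}\times I_{m_2}|$, the Chinese Remainder Theorem guarantees that $\phi$ is a bijection. Next I would read off the images of the two sets directly from the definition of $A_n(m)$. By definition $s\in A_n(m_1)$ exactly when $s\equiv a\ (mod\ m_1)$ for some $a\in A_{m_1}$, that is, exactly when its first coordinate $s\bmod m_1$ lies in $A_{m_1}$, with no condition placed on the second coordinate; hence $\phi(A_n(m_1))=A_{m_1}\times I_{m_2}$, and symmetrically $\phi(A_n(m_2))=I_{m_1}\times A_{m_2}$.

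Finally, since $\phi$ is a bijection it commutes with intersections and preserves cardinalities, so
\[
\phi\bigl(A_n(m_1)\cap A_n(m_2)\bigr)=(A_{m_1}\times I_{m_2})\cap(I_{m_1}\times A_{m_2})=A_{m_1}\times A_{m_2},
\]
and therefore $\alpha(n)=|A_n|=|A_{m_1}\times A_{m_2}|=|A_{m_1}|\,|A_{m_2}|=\alpha(m_1)\alpha(m_2)$.

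I expect the only delicate point to be the image identification in the second step — specifically, confirming that the requirement $s\bmod m_1\in A_{m_1}$ imposes no restriction whatsoever on the coordinate $s\bmod m_2$. This is precisely where coprimality enters, through the surjectivity half of the Chinese Remainder Theorem, which ensures that for every target pair $(r_1,r_2)$ with $r_1\in A_{m_1}$ there is a genuine preimage in $I_n$. The rest is bookkeeping with the definitions of $A_n(m)$ and $\phi$.
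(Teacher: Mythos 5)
Your proof is correct and is essentially the paper's own argument repackaged: the paper partitions $A_n(m_1)$ and $A_n(m_2)$ into residue blocks $B(a,m_1)$ and $B(b,m_2)$ and invokes the Chinese Remainder Theorem to show each pairwise intersection of blocks is a singleton, which is exactly the content of your claim that the reduction map $\phi$ is a bijection carrying $A_n(m_1)\cap A_n(m_2)$ onto $A_{m_1}\times A_{m_2}$. Your formulation via $\phi$ simply automates the paper's disjointness bookkeeping, and both proofs conclude $\alpha(n)=|A_{m_1}|\cdot|A_{m_2}|$ from the hypothesis $A_n=A_n(m_1)\cap A_n(m_2)$ in the same way.
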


\begin{proof}
Let $n=m_1m_2$ where $m_1$ and $m_2$ are relatively prime. We can decompose $A_n(m_1)$ as the disjoint union of subsets $B(a,m_1):=\{a+jm_1:0\leq j<m_2\}$, where $a\in A_{m_1}$. Similarly, $A_n(m_2)$ is the disjoint union of subsets $B(b,m_2)=\{b+jm_2:0\leq j<m_1\}$, where $b\in A_{m_2}$. Then, 
\[
A_n(m_1)\cap A_n(m_2)=\bigcup_{a\in A_{m_1}\atop b\in A_{m_2}}(B(a,m_1)\cap B(b,m_2)).
\]
Note that $c\in B(a,m_1)\cap B(b,m_2)$ if and only if $c\equiv a\ (mod\ m_1)$ and $c\equiv b\ (mod\ m_2)$; moreover, by the Chinese remainder theorem, there is exactly one solution in $I_n$ of the system of congruences $x\equiv a\ (mod\ m_1), x\equiv b\ (mod\ m_2)$. This means that $B(a,m_1)\cap B(b,m_2)$ has exactly one element. Since the sets $B(a,m_1)\cap B(b,m_2)$ for $a\in A_{m_1}, b\in A_{m_2}$ are pairwise disjoint, we have that $|A_n(m_1)\cap A_n(m_2)|=|A_{m_1}|\cdot |A_{m_2}|$. Now, if the family $\{A_n\}_n$ is multiplicative, then we have $\alpha(n)=|A_n|=|A_n(m_1)\cap A_n(m_2)|=|A_{m_1}|\cdot |A_{m_2}|=\alpha(m_1)\alpha(m_2)$. Thus, the associated function $\alpha$ is multiplicative.
\end{proof}

Now we define two conditions on $\{A_n\}_n$ that will permit us to show that $\{A_n\}_n$ is multiplicative.\\

\noindent C1.\label{C1} If $m$ divides $n$ and $a\in A_n$, then $a\ (mod\ m)\in A_m$, where $a\ (mod\ m)$ is the residue of $a$ when $a$ is divided by $m$.\\

\noindent C2.\label{C2} If $n=m_1m_2$ for relatively prime $m_1$ and $m_2$; $a_1\in A_{m_1}, a_2\in A_{m_2}$ and $a$ is the unique solution in $I_n$ to the system of congruences $x\equiv a_1\ (mod\ m_1), x\equiv a_2\ (mod\ m_2)$, then $a\in A_n$.\\

Note that if $\{A_n\}_n$ satisfies condition C1 and $m$ divides $n$, then $A_n\subseteq A_n(m)$.

\begin{lemma}\label{lmC1C2}
If $\{A_n\}_n$ satisfies C1 and C2, then $\{A_n\}_n$ is multiplicative. 
\end{lemma}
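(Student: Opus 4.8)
The plan is to prove the set equality $A_n=A_n(m_1)\cap A_n(m_2)$ for coprime $m_1,m_2$ with $n=m_1m_2$ by establishing the two inclusions separately, using C1 for one direction and C2 for the other. Multiplicativity of $\alpha$ would then be immediate from Lemma \ref{lmmultiplicative}.

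First I would dispatch the inclusion $A_n\subseteq A_n(m_1)\cap A_n(m_2)$, which comes essentially for free from C1. Since $m_1\mid n$ and $m_2\mid n$, the remark recorded immediately after C1 gives $A_n\subseteq A_n(m_1)$ and $A_n\subseteq A_n(m_2)$, and intersecting yields the claim. To be fully explicit, given $a\in A_n$, condition C1 says that $a\ (mod\ m_i)\in A_{m_i}$ for $i=1,2$; since $a\equiv a\ (mod\ m_i)$ with $a\in I_n$, the definition of $A_n(m_i)$ puts $a\in A_n(m_i)$.

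The substantive direction is the reverse inclusion $A_n(m_1)\cap A_n(m_2)\subseteq A_n$, and this is where C2 does the work. I would take an arbitrary $c\in A_n(m_1)\cap A_n(m_2)$ and unpack the membership: by the definition of $A_n(m_i)$ there exist $a_1\in A_{m_1}$ and $a_2\in A_{m_2}$ with $c\equiv a_1\ (mod\ m_1)$ and $c\equiv a_2\ (mod\ m_2)$. The key step is to recognize $c$ as the Chinese-remainder solution attached to the pair $(a_1,a_2)$: the system $x\equiv a_1\ (mod\ m_1)$, $x\equiv a_2\ (mod\ m_2)$ has a unique solution $a$ in $I_n$, and since $c\in I_n$ also satisfies both congruences, uniqueness forces $c=a$. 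Condition C2 asserts precisely that this $a$ lies in $A_n$, whence $c=a\in A_n$.

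The main (and really the only) obstacle is bookkeeping rather than mathematics: one must be careful that the residues $a_1,a_2$ extracted from the two membership conditions are exactly the data fed into C2, and that it is the \emph{uniqueness} clause of the Chinese remainder theorem that identifies the abstract solution $a$ produced by C2 with the concrete element $c$ we started with. Once that identification is in place, C1 and C2 furnish the two halves of the desired set equality, and the conclusion that $\{A_n\}_n$ is multiplicative follows.
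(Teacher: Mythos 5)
Your proposal is correct and follows essentially the same route as the paper's proof: C1 yields $A_n\subseteq A_n(m_1)\cap A_n(m_2)$, and for the reverse inclusion one unpacks membership in $A_n(m_1)\cap A_n(m_2)$ to obtain $a_1\in A_{m_1}$, $a_2\in A_{m_2}$ and invokes C2. Your explicit use of the uniqueness clause of the Chinese remainder theorem to identify the given element with the solution $a$ appearing in C2 is a point the paper leaves implicit, but it is the same argument.
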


\begin{proof}
We assume that $n=m_1m_2$ for relatively prime $m_1$ and $m_2$. Since $\{A_n\}_n$ satisfies C1, we have  $A_n\subseteq A_n(m_1)\cap A_n(m_2)$. To show the other inclusion, take $a\in A_n(m_1)\cap A_n(m_2)$. Then, there exist $a_1\in A_{m_1}$ and $a_2\in A_{m_2}$ such that $a\equiv a_1\ (mod\ m_1)$ and $a\equiv a_2\ (mod\ m_2)$ and, since $\{A_n\}_n$ satisfies C2, $a\in A_n$. This shows that $A_n=A_n^{m_1}\cap A_n^{m_2}$. 
\end{proof}

We assume that the family of sets $\{A_n\}_n$ satisfy conditions C1 and C2. Then, by Lemmas \ref{lmmultiplicative} and \ref{lmC1C2}, the associated function $\alpha$ is multiplicative. Thus, to determine the values of $\alpha$ on all positive integers, it is enough to determine $\alpha(p^n)$ for all primes $p$, and $n\geq 1$. This yields us to study the sets $A_{p^n}$ for powers of primes $p^n$.

Condition C1 on the family $\{A_n\}_{n}$ implies that if $p$ is prime and $n\geq 1$, then $A_{p^n}\subseteq A_{p^n}(p^{n-1})$. For $n\geq 1$, we define $N_{p^n}:=A_{p^n}(p^{n-1})\setminus A_{p^n}$ and call these sets the \textit{$N$-sets} of the prime $p$.

\begin{lemma}\label{lmrecurrence}
Let $p$ be a prime and $n\geq 1$. Then 
\begin{equation*}
\alpha(p^n)=p\alpha(p^{n-1})-|N_{p^n}|.    
\end{equation*}
\end{lemma}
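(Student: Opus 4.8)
The plan is to compute the cardinality of the set $A_{p^n}(p^{n-1})$ directly and then exploit the fact that it splits into the two disjoint pieces $A_{p^n}$ and $N_{p^n}$. First I would invoke the explicit description of $A_n(m)$ recorded earlier. Since $p^n/p^{n-1}=p$, this gives
\[
A_{p^n}(p^{n-1})=\{a+jp^{n-1}:a\in A_{p^{n-1}},\ 0\leq j<p\}.
\]
The key claim is that the elements listed on the right are pairwise distinct, so that there are exactly $p\cdot|A_{p^{n-1}}|=p\,\alpha(p^{n-1})$ of them.

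To establish the distinctness, suppose $a+jp^{n-1}=a'+j'p^{n-1}$ with $a,a'\in A_{p^{n-1}}\subseteq I_{p^{n-1}}$ and $0\leq j,j'<p$. Reducing modulo $p^{n-1}$ forces $a\equiv a'\ (mod\ p^{n-1})$, and since both $a$ and $a'$ lie in $\{0,1,\ldots,p^{n-1}-1\}$ this yields $a=a'$; cancelling then gives $j=j'$. Hence the map $(a,j)\mapsto a+jp^{n-1}$ is injective, and therefore $|A_{p^n}(p^{n-1})|=p\,\alpha(p^{n-1})$.

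Next I would use condition C1, in the form noted immediately after its statement, to conclude that $A_{p^n}\subseteq A_{p^n}(p^{n-1})$. Since $N_{p^n}$ is defined as $A_{p^n}(p^{n-1})\setminus A_{p^n}$, the set $A_{p^n}(p^{n-1})$ is the disjoint union of $A_{p^n}$ and $N_{p^n}$. Taking cardinalities gives $p\,\alpha(p^{n-1})=\alpha(p^n)+|N_{p^n}|$, and rearranging produces the stated recurrence $\alpha(p^n)=p\,\alpha(p^{n-1})-|N_{p^n}|$.

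The only genuinely substantive step is the injectivity count in the first paragraph; the remainder is the bookkeeping of a disjoint-union decomposition, so I do not anticipate a real obstacle. One point worth checking for safety is the base case $n=1$, where $p^{n-1}=1$ and $A_1=I_1=\{0\}$ force $A_p(1)=I_p$, so that the recurrence correctly reduces to $\alpha(p)=p-|N_p|$.
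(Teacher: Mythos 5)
Your proposal is correct and follows essentially the same route as the paper's proof: both compute $|A_{p^n}(p^{n-1})|=p\,\alpha(p^{n-1})$ and subtract $|N_{p^n}|$ using the disjoint decomposition $A_{p^n}(p^{n-1})=A_{p^n}\cup N_{p^n}$ guaranteed by condition C1. You merely make explicit two steps the paper leaves implicit (the injectivity of $(a,j)\mapsto a+jp^{n-1}$ and the base case $n=1$), which is harmless added detail, not a different argument.
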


\begin{proof}
The size of $A_{p^n}(p^{n-1})$ is $p\cdot |A_{p^{n-1}}|=p\alpha(p^{n-1})$. Then 
\[\alpha(p^n)=|A_{p^n}(p^{n-1})\setminus N_{p^n}|=|A_{p^n}(p^{n-1})|-|N_{p^n}|=p\alpha(p^{n-1})-|N_{p^n}|.\]
\end{proof}

From now on, we focus on the size of sets $|N_{p^n}|$ for $n\geq 1$.

\vspace{.5cm}

The important families of sets $\{A_n\}_{n}$ we are interested in are those associated to a polynomial $f(x_1,x_2,\ldots, x_t)$, that is, $A_n$ is the set of elements $a\in I_n$ such that the congruence $f(x_1, x_2, \ldots, x_t)\equiv a\ (mod\ n)$ is solvable. We refer to the associated function $\alpha$ as the fuction associated to $f(x_1,x_2,\ldots,x_t)$.

\begin{proposition}\label{propA_n}
The family $\{A_n\}_n$ associated to a polynomial $f(x_1,x_2,\ldots, x_t)$ is multiplicative. In particular, the function $\alpha$ associated to $f(x_1,x_2,\ldots, x_t)$ is multiplicative. 
\end{proposition}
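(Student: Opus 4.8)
The plan is to reduce everything to the two conditions already isolated in the excerpt. By Lemma~\ref{lmC1C2}, any family satisfying C1 and C2 is multiplicative, and by Lemma~\ref{lmmultiplicative} a multiplicative family forces its associated function $\alpha$ to be multiplicative. Hence it suffices to check that the family $\{A_n\}_n$ attached to the polynomial $f(x_1,\ldots,x_t)$ satisfies C1 and C2. The only non-formal ingredient I will use is the elementary observation that a polynomial with integer coefficients preserves congruences: if $x_i\equiv y_i\ (mod\ m)$ for every $i$, then $f(x_1,\ldots,x_t)\equiv f(y_1,\ldots,y_t)\ (mod\ m)$.

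To verify C1, I would assume that $m$ divides $n$ and that $a\in A_n$, so that there are integers $x_1,\ldots,x_t$ with $f(x_1,\ldots,x_t)\equiv a\ (mod\ n)$. Reducing this congruence modulo the divisor $m$ gives $f(x_1,\ldots,x_t)\equiv a\ (mod\ m)$, and since the right-hand side is congruent to its residue $a\ (mod\ m)$, this exhibits $a\ (mod\ m)$ as a value of $f$ modulo $m$; hence $a\ (mod\ m)\in A_m$.

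To verify C2, I would write $n=m_1m_2$ with $m_1,m_2$ coprime, take $a_1\in A_{m_1}$ and $a_2\in A_{m_2}$, and let $a$ be the unique element of $I_n$ congruent to $a_1$ modulo $m_1$ and to $a_2$ modulo $m_2$. Choose integer tuples $(u_1,\ldots,u_t)$ and $(v_1,\ldots,v_t)$ with $f(u_1,\ldots,u_t)\equiv a_1\ (mod\ m_1)$ and $f(v_1,\ldots,v_t)\equiv a_2\ (mod\ m_2)$. Applying the Chinese remainder theorem one coordinate at a time, I would select $x_i\in I_n$ with $x_i\equiv u_i\ (mod\ m_1)$ and $x_i\equiv v_i\ (mod\ m_2)$ for each $i$. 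The congruence-preserving property then gives $f(x_1,\ldots,x_t)\equiv f(u_1,\ldots,u_t)\equiv a_1\ (mod\ m_1)$ and likewise $f(x_1,\ldots,x_t)\equiv a_2\ (mod\ m_2)$; by the uniqueness part of the Chinese remainder theorem this forces $f(x_1,\ldots,x_t)\equiv a\ (mod\ n)$, so $a\in A_n$, which is exactly C2.

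I do not anticipate a genuine obstacle. The one point requiring a little care is that the Chinese remainder theorem is invoked in two distinct roles: coordinatewise, to assemble a single common preimage $(x_1,\ldots,x_t)$ from the two separate preimages, and again through its uniqueness, to identify the resulting value of $f$ with the prescribed $a$. Beyond this, the argument is a direct unwinding of the definitions of $A_n$ and of conditions C1 and C2, after which Lemmas~\ref{lmmultiplicative} and~\ref{lmC1C2} deliver both claims of the proposition.
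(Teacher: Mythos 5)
Your proposal is correct and takes essentially the same route as the paper's own proof: you verify C1 by reducing the defining congruence modulo the divisor $m$, verify C2 by applying the Chinese remainder theorem coordinatewise to assemble a single tuple that works simultaneously modulo $m_1$ and $m_2$, and then conclude via Lemmas~\ref{lmmultiplicative} and~\ref{lmC1C2}. The only difference is cosmetic: you write out the C1 verification explicitly, which the paper dismisses as trivial.
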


\begin{proof}
Condition C1 is trivially satisfied by this family of sets. Condition C2 is also true. To prove it, assume that $f(a_1,a_2,\ldots, a_t)\equiv a_1\ (mod\ m_1)$ and $f(b_1,b_2,\ldots, b_t)\equiv a_2\ (mod\ m_2)$, where $a_i, b_j\in \mathbb Z$, $m_1$ and $m_2$ are relatively prime, $n=m_1m_2$, $a_i\in I_{m_i}$, $i=1,2$. Let $a$ be the only solution in $I_n$ of the system of congruences $x\equiv a_1\ (mod\ m_1), x\equiv a_2\ (mod\ m_2)$. By the Chinese remainder theorem, for each $j=1,2,\ldots, t$, there exists $c_j\in \mathbb Z$ such that  $c_j\equiv a_j\ (mod\ m_1)$ and $c_j\equiv b_j\ (mod\ m_2)$. Since $f$ is a polynomial, $f(c_1,c_2,\ldots,c_t)\equiv f(a_1,a_2,\ldots,a_t)\equiv a_1\equiv a\ (mod\ m_1)$ and $f(c_1,c_2,\ldots,c_t)\equiv f(b_1,b_2,\ldots,b_t)\equiv a_2\equiv a\ (mod\ m_2)$, from what follows that $f(c_1,c_2,\ldots,c_t)\equiv a\ (mod\ n)$, that is, $a\in A_{n}$. The result follows from Lemmas \ref{lmmultiplicative} and \ref{lmC1C2}. 
\end{proof}

\begin{remark}
Let us consider $r$ families $\{A_n^{(i)}\}_n$, $i=1,2,\ldots, r$, where $A_n^{(i)}\subseteq I_n$. For each $n\geq 1$, let $A_n:=\bigcap_{i=1}^rA_n^{(i)}$. Assume that $A_n\neq \varnothing$ for all $n$. If the $r$ families satisfy $C1$ (resp. C2), then the family $\{A_n\}_n$ satisfy C1 (resp. C2).

In the particular case that $\{A_n^{(i)}\}_n$ is the family associated to some polynomial $f_i(x_1^{(i)}, x_2^{(i)},\ldots, x_{t_i}^{(i)})$, and assuming that the intersections $A_n$ are nonempty, then $\{A_n\}_n$ is multiplicative. The associated function $\alpha$ counts the numbers of elements $a\in I_n$ such that the system of congruences 

\begin{equation*}
f_i(x_1^{(i)}, x_2^{(i)},\ldots, x_{t_i}^{(i)})\equiv a\ (mod\ n),\ i=1,2,\ldots,r    
\end{equation*}
is solvable. 
\end{remark}

\subsection{The multiplicative family associated to $c_1x_1^k+c_2x_2^k+\cdots+c_tx_t^k$.}

We study the multiplicative function $\alpha$ and the sets $A_{p^n}$ associated to a polynomial of the form $f(x_1,x_2,\ldots,x_t)=c_1x_1^k+c_2x_2^k+\cdots+c_tx_t^k$, where $c_1,c_2,\ldots,c_t\in\mathbb{Z}$, $k\geq 1$. To determine the value of $\alpha$ at prime powers, we need to understand the sets $A_{p^n}$ and $N_{p^n}$. The following lemmas give us tools to study these sets.

\begin{lemma}\label{lma+jp^n}
Let $\{A_{n}\}_n$ be the family associated to the polynomial $c_1x_1^k+c_2x_2^k+\cdots+c_tx_t^k$. Let $p$ be a prime number that does not divide $c_1, c_2,\ldots, c_t$ and let $s$ be the highest nonnegative integer such that $p^s$ divides $k$. Suppose $a\in A_{p^n}$ and 
\[c_1m_1^k+c_2m_2^k+\cdots+c_tm_t^k\equiv a\ (mod\ p^n),\]
where $m_1,m_2\ldots, m_t\in \mathbb Z$ and at least one $m_i$ is not divisible by $p$. If $n\geq 2s+1$, then $a+jp^n\in A_{p^{n+1}}$ for all $j$ such that $0\leq j<p$.
\end{lemma}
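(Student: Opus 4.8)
The plan is to treat this as a Hensel-type lifting argument in which only a single variable is adjusted. After relabeling the variables (and the corresponding coefficients) I may assume $p \nmid m_1$. The idea is to keep $x_2 = m_2, \ldots, x_t = m_t$ fixed and to perturb the first variable by a multiple of $p^{n-s}$, writing $x_1 = m_1 + p^{n-s}u$ with $u$ an integer to be chosen; note $n - s \geq s+1 > 0$, so $p^{n-s}$ is a genuine integer. The goal is to show that, as $u$ runs over a complete residue system modulo $p$, the value $c_1 x_1^k + \sum_{i\geq 2} c_i m_i^k$ runs modulo $p^{n+1}$ over exactly the residues $a + jp^n$, $0 \leq j < p$, which is precisely the assertion that each such $a+jp^n$ lies in $A_{p^{n+1}}$.

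First I would expand $(m_1 + p^{n-s}u)^k$ by the binomial theorem. The term of index $i$ is $\binom{k}{i} m_1^{k-i} p^{i(n-s)} u^i$, whose $p$-adic valuation is at least $i(n-s) \geq 2(n-s)$ for every $i \geq 2$ (the binomial coefficient, $m_1^{k-i}$, and $u^i$ all contribute nonnegative valuation). The hypothesis $n \geq 2s+1$ is exactly what forces $2(n-s) \geq n+1$, so every term with $i \geq 2$ vanishes modulo $p^{n+1}$, leaving
\[
(m_1 + p^{n-s}u)^k \equiv m_1^k + k\,m_1^{k-1} p^{n-s} u \pmod{p^{n+1}}.
\]

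Next I would analyze the coefficient of $u$. Since $p \nmid c_1$, $p \nmid m_1$, and $p^s$ exactly divides $k$, the integer $c_1 k m_1^{k-1} p^{n-s}$ has $p$-adic valuation exactly $n$, so it equals $p^n\lambda$ with $\lambda$ coprime to $p$. Writing the original sum as $c_1 m_1^k + \sum_{i\geq 2} c_i m_i^k = a + p^n w$ for some integer $w$ (possible because this sum is $\equiv a \pmod{p^n}$), the perturbed value satisfies
\[
c_1(m_1 + p^{n-s}u)^k + \sum_{i\geq 2} c_i m_i^k \equiv a + p^n(w + \lambda u) \pmod{p^{n+1}}.
\]
Because $\lambda$ is a unit modulo $p$, the map $u \mapsto w + \lambda u$ is a bijection on residues modulo $p$, so as $u$ ranges over a complete residue system the right-hand side ranges over all of $a, a+p^n, \ldots, a+(p-1)p^n$ modulo $p^{n+1}$. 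For each $j$ with $0 \leq j < p$ this produces an explicit tuple representing $a + jp^n$, giving $a + jp^n \in A_{p^{n+1}}$.

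The main obstacle, and the only place the precise hypothesis $n \geq 2s+1$ enters, is the two-sided valuation bookkeeping on the binomial expansion: one must simultaneously check that the combination of $v_p(k)=s$ with the perturbation scale $p^{n-s}$ pushes the linear term to valuation \emph{exactly} $n$ (so the linear coefficient is a true unit times $p^n$ and varying $u$ sweeps out all $p$ classes rather than a proper subgroup), while all quadratic and higher terms are forced to valuation $\geq n+1$ and therefore disappear. Once these two valuation estimates are pinned down, the surjectivity onto the $p$ residues $a+jp^n$ is immediate from $\lambda$ being invertible modulo $p$, and the rest is routine.
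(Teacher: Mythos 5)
Your proposal is correct and is essentially the paper's own argument: both perturb the single unit variable as $m_1 + p^{n-s}u$, use the binomial theorem with the estimate $i(n-s) \geq 2(n-s) = n + (n-2s) \geq n+1$ to kill all terms of degree $i \geq 2$ modulo $p^{n+1}$, and observe that the linear coefficient $c_1 k m_1^{k-1} p^{n-s}$ is exactly $p^n$ times a unit since $v_p(k)=s$. The only cosmetic difference is that the paper solves the linear congruence $c_1 m_1^{k-1} k_0 x + w \equiv j \pmod{p}$ for each fixed $j$, whereas you phrase the same fact as the affine map $u \mapsto w + \lambda u$ being a bijection on residues modulo $p$.
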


\begin{proof}
We have that there is some integer $w$ such that $c_1m_1^k+c_2m_2^k+\cdots+c_tm_t^k= a+wp^n$. Assume that $p$ does not divide $m_1$. Write $k=p^sk_0$ where $s\geq 0$ and $p$ does not divide $k_0$. Since $p$ does not divide $c_1m_1^{k-1}k_0$, the congruence $c_1m_1^{k-1}k_0x+w\equiv j\ (mod\ p)$ has solution; so there are integers $d$ and $e$ such that $c_1m_1^{k-1}k_0d+w=j+ep$. By the binomial theorem,
\begin{align*}
(m_1+dp^{n-s})^k&=m_1^k+km_1^{k-1}dp^{n-s}+\sum_{2\leq t\leq k}{k\choose t}m_1^{k-t}d^tp^{t(n-s)}\\
&=m_1^k+m_1^{k-1}k_0dp^{n}+\sum_{2\leq t\leq k}{k\choose t}m_1^{k-t}d^tp^{t(n-s)}\\
\end{align*}
Since $n\geq 2s+1$, for $t\geq 2$ we have $t(n-s)\geq 2(n-s)=n+(n-2s)\geq n+1$. Then 
\[
(m_1+dp^{n-s})^k\equiv (m_1^k+m_1^{k-1}k_0dp^{n})\ (mod\ p^{n+1}). 
\]
Therefore, modulo $p^{n+1}$ we have
\begin{align*}
c_1(m_1+dp^{n-s})^k+\cdots+c_tm_t^k&\equiv (c_1m_1^k+\cdots+c_tm_t^k)+c_1m_1^{k-1}k_0dp^n\\
                                   &\equiv a+wp^n+c_1m_1^{k-1}k_0dp^n\\
                                   &\equiv a+(w+c_1m_1^{k-1}k_0d)p^n\\
                                   &\equiv a+jp^n+ep^{n+1}\\
                                   &\equiv a+jp^n,
\end{align*} 
which shows that $a+jp^n\in A_{p^{n+1}}$. 
\end{proof}

\begin{lemma}\label{lmN-sets}
Let $p$ be a prime number and consider the $N$-sets $N_{p^n}$ associated to the polynomial $c_1x_1^k+c_2x_2^k+\cdots+c_tx_t^k$. If $p$ does not divide $c_1, \ldots, c_t$, then 
\[
N_{p^n}\subseteq \{p^ka:a\in N_{p^{n-k}}\}.
\]
for every $n> k+1$ 
\end{lemma}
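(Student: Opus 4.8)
The plan is to take an arbitrary $b\in N_{p^n}$ and show directly that $b=p^ka$ for some $a\in N_{p^{n-k}}$. Unpacking the definition, $b\in N_{p^n}=A_{p^n}(p^{n-1})\setminus A_{p^n}$ means two things: the residue $b\bmod p^{n-1}$ lies in $A_{p^{n-1}}$, so there is a representation
\[
c_1m_1^k+c_2m_2^k+\cdots+c_tm_t^k\equiv b\ (mod\ p^{n-1}),
\]
while $b\notin A_{p^n}$, so no representation of $b$ survives modulo $p^n$. The whole argument hinges on first showing that in \emph{every} such representation modulo $p^{n-1}$, all the variables $m_i$ must be divisible by $p$.

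To prove this, I would argue by contraposition using Lemma \ref{lma+jp^n}. Suppose some $m_i$ were coprime to $p$. Applying Lemma \ref{lma+jp^n} with its parameter $n$ replaced by $n-1$ and with $a:=b\bmod p^{n-1}$, the conclusion would be that $a+jp^{n-1}\in A_{p^n}$ for all $0\leq j<p$; since $b$ is itself one of these lifts, this forces $b\in A_{p^n}$, contradicting $b\in N_{p^n}$. The one hypothesis to verify is that $n-1\geq 2s+1$, where $s$ is the largest integer with $p^s\mid k$ (the $s$ of Lemma \ref{lma+jp^n}). This is exactly where the assumption $n>k+1$ enters: from $p^s\mid k$ we get $2^s\leq p^s\leq k$, and since $2s\leq 2^s$ for every $s\geq 0$, we obtain $2s\leq k$, hence $n-1\geq k+1\geq 2s+1$. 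I expect this index bookkeeping --- matching the hypothesis of the lifting lemma to the bound $n>k+1$ --- to be the main obstacle, or at least the step most easily gotten wrong.

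Once every $m_i$ is known to be a multiple of $p$, I would write $m_i=pm_i'$ and invoke the homogeneity identity $f(pm_1',\ldots,pm_t')=p^kf(m_1',\ldots,m_t')$. The representation then reads $p^kf(m_1',\ldots,m_t')\equiv b\ (mod\ p^{n-1})$; since $n-1\geq k$, this shows $p^k\mid b$, so we may write $b=p^ka$ with $a\in I_{p^{n-k}}$. Cancelling $p^k$ from the congruence gives $f(m_1',\ldots,m_t')\equiv a\ (mod\ p^{n-k-1})$, which says precisely that $a\bmod p^{n-k-1}\in A_{p^{n-k-1}}$, i.e.\ $a\in A_{p^{n-k}}(p^{n-k-1})$.

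It remains to check $a\notin A_{p^{n-k}}$, which I would again establish by contraposition: if $f(u_1,\ldots,u_t)\equiv a\ (mod\ p^{n-k})$ had a solution, then multiplying through by $p^k$ and using homogeneity once more yields $f(pu_1,\ldots,pu_t)\equiv p^ka=b\ (mod\ p^n)$, placing $b\in A_{p^n}$ and again contradicting $b\in N_{p^n}$. Combining the two memberships gives $a\in A_{p^{n-k}}(p^{n-k-1})\setminus A_{p^{n-k}}=N_{p^{n-k}}$, so $b=p^ka$ with $a\in N_{p^{n-k}}$, which is exactly the claimed inclusion.
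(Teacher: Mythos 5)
Your proof is correct and follows essentially the same route as the paper's: forcing all $m_i$ to be divisible by $p$ via Lemma \ref{lma+jp^n} (with the same bound $k\geq p^s\geq 2^s\geq 2s$ justifying $n-1\geq 2s+1$), extracting $p^k$ by homogeneity, and ruling out $a\in A_{p^{n-k}}$ by multiplying a hypothetical representation back by $p^k$. The only cosmetic difference is that you divide $b$ itself by $p^k$, whereas the paper first writes $b=c+jp^{n-1}$ with $c\in A_{p^{n-1}}$ and sets $a=c/p^k+jp^{n-k-1}$; these are the same element.
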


\begin{proof}
If $p^s$ is the highest power of $p$ that divides $k$, then $k\geq p^s\geq 2^s\geq 2s$. Thus, if $n>k+1$, then $n-1\geq 2s+1$ and we can apply Lemma \ref{lma+jp^n}. If $b\in N_{p^n}$, then $b\in A_{p^n}(p^{n-1})$ and  $b=c+jp^{n-1}$ for some $c\in A_{p^{n-1}}$ and $0\leq j<p$. There are integers $m_1,\ldots,m_t$ such that $c_1m_1^k+\cdots+c_tm_t^k\equiv c\ (mod\ p^{n-1})$. If some $m_i$ is not divisible by $p$, then by Lemma \ref{lma+jp^n}, $b=c+jp^{n-1}\in A_{p^n}$, a contradiction. It follows that all the $m_i$ are divisible by $p$. Since $n-1>k$, $p^k$ divides $c$ and we get the congruence $c_1(m_1/p)^k+\cdots+c_t(m_t/p)^k\equiv c/p^k\ (mod\ p^{n-k-1})$ that shows that $c/p^k\in A_{p^{n-k-1}}$.

We claim that $c/p^k+jp^{n-k-1}\in N_{p^{n-k}}$. On the contrary, if $c_1q_1^k+\cdots+c_tq_t^k\equiv c/p^k+jp^{n-k-1}\ (mod\ p^{n-k})$ for some integers $q_1,\ldots,q_t$, then by multiplying by $p^k$ we obtain that $c_1(pq_1)^k+\cdots+c_t(pq_t)^k\equiv c+jp^{n-1}\ (mod\ p^{n})$, that is, $b=c+jp^{n-1}\in A_{p^n}$, a contradiction. Thus, if $a:=c/p^k+jp^{n-k-1}$, then $a\in N_{p^{n-k}}$ and $b=c+jp^{n-1}=p^ka$, which ends the proof. 
\end{proof}

We now define a condition on the prime $p$ and the polynomial such that the other inclusion in Lemma \ref{lmN-sets} holds. This condition is satisfied by most of the cases we are interested in. When this condition fails, we find another way to tackle the problem.

Let $p$ be a prime and $f(x_1,\ldots,x_t)$ any polynomial with coefficients in $\mathbb Z$. We say a non-negative integer $e$ is an \emph{exponent of $p$ in $f(x_1,\ldots,x_t)$} if whenever $p^e$ divides an integer of the form $f(m_1,\ldots,m_t)$, then the quotient $f(m_1,\ldots,m_t)/p^e$ is also of the form $f(q_1,\ldots,q_t)$ for some integers $q_1,\ldots, q_t$.

\begin{lemma}\label{lmexponent} The following statements are true.
\begin{enumerate}
\item For every prime number $p$ and $k\geq 1$, $k$ is an exponent of $p$ in $x^k$. 
\item If $p=2$ or $p$ is prime with $p\equiv 1\ (mod\ 4)$, then 1 is an exponent of $p$ in the polynomial $x^2+y^2$. 
\item If $p$ is prime and $p\equiv 3\ (mod\ 4)$, then 2 is an exponent of $p$ in the polynomial $x^2+y^2$.
\item 2 is an exponent of 2 in the polynomial $x^2+y^2+z^2$.
\end{enumerate}
\end{lemma}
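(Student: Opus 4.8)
The plan is to prove the four parts separately, since only part (2) in the case $p\equiv 1\ (mod\ 4)$ requires a genuine idea; the remaining parts are short divisibility arguments resting on the behaviour of squares modulo $p$ or modulo $4$. For part (1), the observation is that for a prime $p$ one has $p\mid m^k$ if and only if $p\mid m$, so $p^k\mid m^k$ holds \emph{precisely} when $p\mid m$. In that case $m=pm'$ and $m^k/p^k=(m')^k$, which is again a $k$-th power; hence $k$ is an exponent of $p$ in $x^k$.

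For part (3) the key input is that $-1$ is not a quadratic residue modulo a prime $p\equiv 3\ (mod\ 4)$. From this I would argue that $p\mid m^2+n^2$ forces $p\mid m$ and $p\mid n$: if $p\nmid n$ then $(mn^{-1})^2\equiv -1\ (mod\ p)$, a contradiction, so $p\mid n$ and then $p\mid m$. Writing $m=pm'$, $n=pn'$ gives $m^2+n^2=p^2(m'^2+n'^2)$, so whenever $p^2\mid m^2+n^2$ the quotient is the sum of two squares $m'^2+n'^2$; thus $2$ is an exponent. For part (4) I would use that a square is congruent to $0$ or $1$ modulo $4$, so $m^2+n^2+z^2\equiv 0\ (mod\ 4)$ can only occur when none of $m,n,z$ is odd. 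Then $m^2+n^2+z^2=4\big((m/2)^2+(n/2)^2+(z/2)^2\big)$ and the quotient is a sum of three squares, giving that $2$ is an exponent of $2$ in $x^2+y^2+z^2$.

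The substantive part is (2). For $p=2$, the identity $\frac{m^2+n^2}{2}=\left(\frac{m+n}{2}\right)^2+\left(\frac{m-n}{2}\right)^2$ produces integer squares exactly when $m,n$ have the same parity, which is the condition $2\mid m^2+n^2$. For $p\equiv 1\ (mod\ 4)$, I would start from a representation $p=a^2+b^2$ (which exists by Theorem \ref{teosumsquares}) and use the Brahmagupta--Fibonacci identities
\[(m^2+n^2)(a^2+b^2)=(ma+nb)^2+(mb-na)^2=(ma-nb)^2+(mb+na)^2.\]
The crucial step is to locate a factor of $p$: from $b^2(m^2+n^2)-n^2(a^2+b^2)=(mb-na)(mb+na)$, together with $p\mid m^2+n^2$ and $p\mid a^2+b^2$, the primality of $p$ yields $p\mid mb-na$ or $p\mid mb+na$. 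In the first case, since $(m^2+n^2)p$ and $(mb-na)^2$ are both divisible by $p^2$, the first identity forces $p\mid ma+nb$; dividing that identity by $p^2$ exhibits $(m^2+n^2)/p=\left(\frac{ma+nb}{p}\right)^2+\left(\frac{mb-na}{p}\right)^2$ as a sum of two squares. The second case is symmetric using the other identity.

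The hard part will be exactly this divisibility bookkeeping in (2): showing first that $p$ divides one of the cross terms and then that it divides the companion term, so that the division by $p^2$ is legitimate and yields integers. A cleaner, more conceptual route would recast (2) and (3) in the Gaussian integers $\mathbb Z[i]$, where a prime $p\equiv 1\ (mod\ 4)$ splits as $p=\pi\bar\pi$ while $p\equiv 3\ (mod\ 4)$ stays inert; the statements about dividing $m^2+n^2=(m+ni)(m-ni)$ by $p$ or $p^2$ then become transparent consequences of unique factorization. I would keep the elementary identity-based argument in the write-up but regard this as the structural reason behind it.
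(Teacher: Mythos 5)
Your proof is correct, but in parts (2) and (3) it takes a genuinely different, more elementary route than the paper. The paper dispatches both parts in one line each by citing Euler's characterization (Theorem \ref{teosumsquares}): dividing a sum of two squares by $p=2$ or by a prime $p\equiv 1\pmod 4$ (resp.\ by $p^2$ when $p\equiv 3\pmod 4$) does not disturb the parity of the exponents of primes $\equiv 3\pmod 4$, so the quotient is again a sum of two squares. You instead reprove the underlying descent directly: for (3), the fact that $-1$ is a quadratic non-residue modulo $p\equiv 3\pmod 4$ forces $p\mid m$ and $p\mid n$, so the quotient by $p^2$ is visibly $m'^2+n'^2$; for (2), the parity identity $\frac{m^2+n^2}{2}=\left(\frac{m+n}{2}\right)^2+\left(\frac{m-n}{2}\right)^2$ handles $p=2$, and for $p=a^2+b^2\equiv 1\pmod 4$ your Brahmagupta--Fibonacci bookkeeping is sound: from $b^2(m^2+n^2)-n^2(a^2+b^2)=(mb-na)(mb+na)$ primality gives $p\mid mb\mp na$, and since $(m^2+n^2)p$ and $(mb\mp na)^2$ are divisible by $p^2$, the identity forces $p\mid ma\pm nb$, so dividing by $p^2$ yields an explicit integral representation of $(m^2+n^2)/p$. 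Parts (1) and (4) coincide with the paper's arguments, and your mod-$4$ count in (4) is actually tidier than the paper's two-case analysis, which silently discards the odd-sum cases. The trade-off: the paper's proof is shorter because it leans on a theorem already stated, while yours is constructive, essentially self-contained (needing only the representation $p=a^2+b^2$), and, as you observe, is the elementary shadow of unique factorization in $\mathbb Z[i]$. One degenerate case neither write-up mentions: $m^2+n^2=0$ lies outside Euler's theorem as stated for positive integers, but $0/p^e=0^2+0^2$, so both proofs survive trivially.
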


\begin{proof}
$(1).$ If $p^k$ divides $m^k$, then $p$ divides $m$ and $m^k/p^k=(m/p)^k$. 

$(2).$ If $p$ divides an integer of the form $x^2+y^2$, then $(x^2+y^2)/p$ is a sum of two squares by Theorem \ref{teosumsquares}.

$(3).$ If $p\equiv 3\ (mod\ 4)$ divides an integer of the form $x^2+y^2$, then $(x^2+y^2)/p^2$ is a sum of two squares by Theorem \ref{teosumsquares}.

$(4)$ If 4 divides $m_1^2+m_2^2+m_3^2$ for integers $m_1, m_2$ and $m_3$, there are two cases: two of the three are odd and one is even, or the three are even. In the first case, say $m_1=2w_1+1, m_2=2w_2+1$ and $m_3=2m_3$. Then $m_1^2+m_2^2+m_3^2=4(w_1^2+w_2^2+w_1+w_2+w_3^2)+2$, which is not divisible by 4. Then, the three integers are even. Write $m_1=2m_1, m_2=2w_2, m_3=2w_3$; therefore, $m_1^2+m_2^2+m_3^2=4(w_1^2+w_2^2+w_3^2)$ and this ends the proof. 
\end{proof}

Note that if $e$ is an exponent of a prime $p$ in a polynomial $f(x_1,x_2,\ldots, x_t)$, then any positive multiple of $e$ is also an exponent of $p$ in $f(x_1,x_2,\ldots, x_t)$.

\begin{lemma}\label{lmN-sets2}
If an exponent $e$ of a prime $p$ in the polynomial $c_1x_1^k+\cdots+c_tx_t^k$ divides $k$, then $\{p^ka:a\in N_{p^{n-k}}\}\subseteq N_{p^n}$ for $n>k$.
\end{lemma}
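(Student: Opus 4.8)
The plan is to unwind the definition $N_{p^n}=A_{p^n}(p^{n-1})\setminus A_{p^n}$ and show, for a fixed $a\in N_{p^{n-k}}$, that $p^k a$ satisfies both membership conditions defining $N_{p^n}$: namely (i) $p^k a\in A_{p^n}(p^{n-1})$, and (ii) $p^k a\notin A_{p^n}$. Note first that $n>k$ is exactly what is needed for $N_{p^{n-k}}$ to be defined (it requires $n-k\geq 1$), and that since $e$ is an exponent of $p$ in $c_1x_1^k+\cdots+c_tx_t^k$ and $e\mid k$ with $k\geq 1$, the remark preceding this lemma shows that $k$ itself is an exponent of $p$ in the polynomial. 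This upgraded fact is what I will feed into step (ii). I would also record at the outset that $0\leq a<p^{n-k}$ forces $0\leq p^k a<p^n$, so $p^k a\in I_{p^n}$ and the two conditions above really do capture membership in $N_{p^n}$.

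For step (i), I would use that $a\in N_{p^{n-k}}\subseteq A_{p^{n-k}}(p^{n-k-1})$, so the residue of $a$ modulo $p^{n-k-1}$ lies in $A_{p^{n-k-1}}$; that is, there exist integers $q_1,\ldots,q_t$ with $c_1q_1^k+\cdots+c_tq_t^k\equiv a\ (mod\ p^{n-k-1})$. Multiplying this congruence by $p^k$ and absorbing the factor into the variables gives $c_1(pq_1)^k+\cdots+c_t(pq_t)^k\equiv p^k a\ (mod\ p^{n-1})$, since $p^k\cdot p^{n-k-1}=p^{n-1}$. This exhibits the residue $p^k a\bmod p^{n-1}$ as an element of $A_{p^{n-1}}$, and combined with $p^k a\in I_{p^n}$ it yields $p^k a\in A_{p^n}(p^{n-1})$ by the definition of the operator $A_n(m)$. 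This step is essentially the same lifting idea as in Lemma \ref{lmN-sets}, run in the opposite direction, and I expect it to be routine.

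For step (ii), I would argue by contradiction, and this is where the exponent hypothesis does the real work. Suppose $p^k a\in A_{p^n}$, so there are integers $m_1,\ldots,m_t$ with $c_1m_1^k+\cdots+c_tm_t^k\equiv p^k a\ (mod\ p^n)$, i.e. $c_1m_1^k+\cdots+c_tm_t^k=p^k a+wp^n$ for some integer $w$. Because $n>k$, the right-hand side equals $p^k(a+wp^{n-k})$ and is therefore divisible by $p^k$. Since $k$ is an exponent of $p$ in the polynomial, the quotient $a+wp^{n-k}=(c_1m_1^k+\cdots+c_tm_t^k)/p^k$ is again representable, say $a+wp^{n-k}=c_1q_1^k+\cdots+c_tq_t^k$. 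Reducing modulo $p^{n-k}$ gives $c_1q_1^k+\cdots+c_tq_t^k\equiv a\ (mod\ p^{n-k})$, so $a\in A_{p^{n-k}}$, contradicting $a\in N_{p^{n-k}}=A_{p^{n-k}}(p^{n-k-1})\setminus A_{p^{n-k}}$. Hence $p^k a\notin A_{p^n}$, and together with step (i) this gives $p^k a\in N_{p^n}$, as desired.

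The main obstacle I anticipate is purely in step (ii): making sure the divisibility bookkeeping is airtight (that $n>k$ really guarantees $p^k\mid c_1m_1^k+\cdots+c_tm_t^k$, not merely congruence mod a smaller power) and that the exponent property is invoked in its exact form — applied to the integer $f(m_1,\ldots,m_t)$ as a genuine value of the polynomial, with $k$ (not just $e$) justified as an exponent via the multiple-of-exponent remark. Step (i) and the range check are mechanical by comparison, so I would keep those brief and concentrate the write-up on the contradiction.
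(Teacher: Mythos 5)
Your proposal is correct and follows essentially the same route as the paper: the heart of both arguments is the contradiction in your step (ii), upgrading $e$ to the exponent $k$ via the multiple-of-an-exponent remark, factoring $c_1m_1^k+\cdots+c_tm_t^k=p^k(a+wp^{n-k})$, and applying the exponent property to conclude $a\in A_{p^{n-k}}$, contradicting $a\in N_{p^{n-k}}$. Your step (i), verifying $p^ka\in A_{p^n}(p^{n-1})$ by lifting a representation of $a$ modulo $p^{n-k-1}$, is a point the paper's proof leaves implicit, so your write-up is in fact slightly more complete than the original.
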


\begin{proof}
If $p^ka\in A_{p^n}$ where $a\in N_{p^{n-k}}$, then there are integers $m_1,\ldots,m_t$ such that $c_1m_1^k+\cdots+c_tm_t^k\equiv p^ka\ (mod\ p^{n})$. Since $e$ divides $k$, $k$ is an exponent of $p$ in $c_1x_1^k+\cdots+c_tx_t^k$. Then we can write $(c_1m_1^k+\cdots+c_tm_t^k )/p^k=c_1q_1^k+\cdots+c_tq_t^k$ for some integers $q_1,\ldots, q_t$. Then, $c_1q_1^k+\cdots+c_tq_t^k\equiv a\ (mod\ p^{n-k})$, that is, $a\in A_{p^{n-k}}$, a contradiction. Thus, $p^ka\in N_{p^n}$ for all $a\in N_{p^{n-k}}$. 
\end{proof}

Lemmas \ref{lmN-sets} and \ref{lmN-sets2} tell us that if there is an exponent of $p$ in $c_1x_1^k+c_2x_2^k+\cdots+c_tx_t^k$ that divides $k$, then for $n>k+1$ we have 
\begin{equation*}
    N_{p^n}=\{p^ka: a\in N_{p^{n-k}}\}.
\end{equation*}
If we use the notation $mA$ to represent the set $\{ma:a\in A\}$, then we can say that for $n>k+1$, $N_{p^n}=p^kN_{p^{n-k}}$. If we write $n=qk+r$ where $2\leq r\leq k+1$, then we have 
\[N_{p^n}=p^kN_{p^{n-k}}=p^{2k}N_{p^{n-2k}}=\cdots=p^{kq}N_{p^r}.\]

For $2\leq r\leq k+1$ we define $n_r:=|N_{p^r}|$. We have the following result.

\begin{proposition}\label{propNsets}
Let $p$ be a prime, $k\geq 1$ and suppose that some exponent $e$ of $p$ in the polynomial $c_1x_1^k+\cdots+c_tx_t^k$ divides $k$, and $p$ does not divide $c_1, \ldots, c_t$. Then
\begin{equation}\label{eqrecurrencealphan_r}
    \alpha(p^n)=p\alpha(p^{n-1})-n_r
\end{equation}
for all $n>1$ such that $n\equiv r\ (mod\ k)$.
\end{proposition}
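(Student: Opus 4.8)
The plan is to reduce the proposition to the one-step recurrence of Lemma \ref{lmrecurrence} together with the structural description of the $N$-sets established immediately before the statement. By Lemma \ref{lmrecurrence}, for every $n\geq 1$ we have $\alpha(p^n)=p\alpha(p^{n-1})-|N_{p^n}|$, so the entire content of the proposition collapses to proving the single cardinality identity $|N_{p^n}|=n_r$ whenever $n\equiv r\ (mod\ k)$ with $2\leq r\leq k+1$. Once this is shown, substitution into Lemma \ref{lmrecurrence} gives the claim immediately.

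To prove the cardinality identity I would split into two cases. First, when $2\leq n\leq k+1$, the window $\{2,3,\ldots,k+1\}$ contains exactly $k$ consecutive integers and therefore meets each residue class modulo $k$ exactly once; hence $n\equiv r\ (mod\ k)$ with $r$ in this window forces $r=n$, and $|N_{p^n}|=|N_{p^r}|=n_r$ holds by the very definition of $n_r$. Second, when $n>k+1$, I invoke the hypothesis that some exponent $e$ of $p$ divides $k$. Since $k$ is then a positive multiple of $e$, the remark following Lemma \ref{lmexponent} guarantees that $k$ is itself an exponent of $p$ in $c_1x_1^k+\cdots+c_tx_t^k$, so Lemmas \ref{lmN-sets} and \ref{lmN-sets2} combine to give $N_{p^n}=p^kN_{p^{n-k}}$ in the range $n>k+1$. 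Writing $n=qk+r$ with $q\geq 1$ and $2\leq r\leq k+1$ and iterating yields $N_{p^n}=p^{kq}N_{p^r}$. The decisive observation is that scaling a set of integers by the fixed nonzero factor $p^{kq}$ is injective, hence cardinality-preserving, so $|N_{p^n}|=|N_{p^r}|=n_r$.

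The only genuine care needed — and it is mild — is boundary bookkeeping on the range of validity. The iterated recurrence applies the identity $N_{p^{m}}=p^kN_{p^{m-k}}$ at the exponents $m=n,n-k,\ldots,k+r$, and the final application, from $N_{p^{k+r}}$ to $p^kN_{p^r}$, is legitimate precisely because $k+r>k+1$, which is exactly the condition $r\geq 2$ built into the window $2\leq r\leq k+1$. This is why the residue is recorded in $\{2,\ldots,k+1\}$ rather than the usual $\{0,\ldots,k-1\}$: it pins the iteration to terminate exactly at $N_{p^r}$ without overshooting toward $N_{p^1}$. I do not expect any serious obstacle here, since the substantive work was already carried out in Lemmas \ref{lmN-sets} and \ref{lmN-sets2}; the present proposition is essentially their corollary, packaged through the cardinality-preservation of scaling and the one-step recurrence of Lemma \ref{lmrecurrence}.
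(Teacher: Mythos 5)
Your proposal is correct and follows essentially the same route as the paper: the paper's proof likewise reduces to Lemma \ref{lmrecurrence} together with the identity $N_{p^n}=p^{kq}N_{p^r}$ obtained by iterating Lemmas \ref{lmN-sets} and \ref{lmN-sets2}, which gives $|N_{p^n}|=|N_{p^r}|=n_r$. Your added boundary bookkeeping (checking that the last iteration step needs $k+r>k+1$, i.e.\ $r\geq 2$) is exactly the detail the paper leaves implicit, and is handled correctly.
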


\begin{proof}
The result follows from the fact that $|N_{p^n}|=|N_{p^r}|=n_r$ and Lemma \ref{lmrecurrence}. 
\end{proof}

It is not difficult to deduce from (\ref{eqrecurrencealphan_r}) the following explicit formulas for $\alpha(p^n)$.

\begin{corollary}\label{corexplicit}
Let $p$ be a prime, $k\geq 1$ and suppose that some exponent $e$ of $p$ in the polynomial $c_1x_1^k+\cdots+c_tx_t^k$ divides $k$, and $p$ does not divide $c_1, \ldots, c_t$. Then for all $n\geq 1$:

\begin{enumerate}
    \item[$(i)$] If $n\equiv 1\ (mod\ k)$, then 
\begin{equation*}
    \alpha(p^n)=p^{n-1}\alpha(p)-\frac{p^{n-1}-1}{p^k-1}\sum_{j=2}^{k+1}n_jp^{k-j+1};
\end{equation*} 

    \item[$(ii)$] If $n\equiv r\ (mod\ k)$ where $2\leq r\leq k$, then
\begin{equation*}
    \alpha(p^n)=p^{n-1}\alpha(p)-\frac{p^{n-1}-p^{r-1}}{p^k-1}\sum_{j=2}^{k+1}n_jp^{k-j+1}-\sum_{j=2}^{r}n_jp^{r-j}.
\end{equation*}
\end{enumerate}
\end{corollary}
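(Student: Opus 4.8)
The plan is to solve the recurrence from Proposition \ref{propNsets} explicitly. First I would observe that Lemma \ref{lmrecurrence}, together with the periodicity $|N_{p^n}|=n_r$ (where $n\equiv r\ (mod\ k)$ and $2\leq r\leq k+1$), turns $\alpha(p^n)=p\alpha(p^{n-1})-|N_{p^n}|$ into a first-order linear recurrence whose inhomogeneous term is periodic of period $k$. Unrolling it from $n$ down to the base value $\alpha(p)$ gives the closed form
\begin{equation*}
\alpha(p^n)=p^{n-1}\alpha(p)-\sum_{i=2}^{n}p^{n-i}\,|N_{p^i}|,
\end{equation*}
which I would establish by a routine induction on $n$. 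Everything then reduces to evaluating the weighted sum $S:=\sum_{i=2}^{n}p^{n-i}|N_{p^i}|$.

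The key idea is to group the indices $i\in\{2,\ldots,n\}$ into consecutive blocks of length $k$, on each of which the residue runs exactly once through $\{2,3,\ldots,k+1\}$, so that $|N_{p^i}|$ cycles through $n_2,n_3,\ldots,n_{k+1}$. Writing $T:=\sum_{j=2}^{k+1}n_jp^{k-j+1}$ for the sum appearing in the statement, a direct computation shows that each full block contributes a scalar multiple of $T$, and summing the resulting geometric progression in $p^k$ via $\sum_{b=0}^{m-1}p^{bk}=\frac{p^{mk}-1}{p^k-1}$ produces the factor $\frac{p^{n-1}-1}{p^k-1}$ (case $n\equiv 1$) or $\frac{p^{n-1}-p^{r-1}}{p^k-1}$ (case $2\leq r\leq k$), where $m$ is the number of full blocks and $p^{mk}$ equals $p^{n-1}$ or $p^{n-r}$ accordingly.

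The two cases of the corollary arise precisely from how the index range $\{2,\ldots,n\}$ decomposes. When $n\equiv 1\ (mod\ k)$, say $n=mk+1$, the range splits into exactly $m$ full blocks with no leftover, yielding formula $(i)$ (and correctly reducing to $\alpha(p)=\alpha(p)$ when $m=0$, where the sum is empty). When $n\equiv r\ (mod\ k)$ with $2\leq r\leq k$, say $n=mk+r$, the range splits into $m$ full blocks covering $i=2,\ldots,mk+1$ together with a partial trailing block $i=mk+2,\ldots,mk+r$ whose residues run through $2,\ldots,r$; the partial block contributes exactly $\sum_{j=2}^{r}n_jp^{r-j}$, which is the extra term in formula $(ii)$.

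I expect the main obstacle to be purely bookkeeping: keeping the residue indexing straight, given that $r$ is normalized to lie in $\{2,\ldots,k+1\}$ rather than $\{0,\ldots,k-1\}$, and correctly aligning each power $p^{n-i}$ with the corresponding $n_j$ so that the partial block reassembles into $\sum_{j=2}^{r}n_jp^{r-j}$ and the full blocks into multiples of $T$. The geometric summation itself and the verification of the boundary case $m=0$ are routine.
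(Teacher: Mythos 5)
Your proposal is correct and takes essentially the route the paper intends: the paper gives no written proof of this corollary, remarking only that it is "not difficult to deduce" from the recurrence (\ref{eqrecurrencealphan_r}), and your argument---unrolling $\alpha(p^n)=p\alpha(p^{n-1})-|N_{p^n}|$ to $\alpha(p^n)=p^{n-1}\alpha(p)-\sum_{i=2}^{n}p^{n-i}|N_{p^i}|$, then grouping the periodic terms into full blocks summing to multiples of $T=\sum_{j=2}^{k+1}n_jp^{k-j+1}$ via $\sum_{b=0}^{m-1}p^{bk}=\frac{p^{mk}-1}{p^k-1}$, with the trailing partial block giving $\sum_{j=2}^{r}n_jp^{r-j}$---is exactly that deduction, with the factors $\frac{p^{n-1}-1}{p^k-1}$ and $\frac{p^{n-1}-p^{r-1}}{p^k-1}$ and the boundary case $m=0$ all checking out.
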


For the sets $N_{p^n}$ with $2\leq n\leq k+1$ we have the following result. 

\begin{proposition}\label{propNsetsn<=k+1}
Consider the $N$-sets associated to the polynomial $c_1x_1^k+c_2x_2^k+\cdots+c_tx_t^k$. Let $p$ be a prime number that does not divide $c_1,\ldots, c_t$ and let $p^s$ be the highest power of $p$ that divides $k$. If $2s+2\leq n\leq k+1$, then 
\begin{equation*}
    N_{p^{n}}\subseteq \{jp^{n-1}:0<j<p\}.
\end{equation*}
Moreover, 
\begin{equation*}
    N_{p^{k+1}}\subseteq\{jp^k:j\notin A_p, 0<j<p\},
\end{equation*}
and if some exponent of $p$ in $c_1x_1^k+c_2x_2^k+\cdots+c_tx_t^k$ divides $k$, then 
\begin{equation*}
    N_{p^{k+1}}=\{jp^k:j\notin A_p, 0<j<p\}.
\end{equation*}
\end{proposition}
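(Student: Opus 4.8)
The plan is to route everything through Lemma \ref{lma+jp^n}, which controls how a solvable residue modulo $p^{n-1}$ lifts to modulo $p^n$. Since $N_{p^n}=A_{p^n}(p^{n-1})\setminus A_{p^n}$ by definition, every $b\in N_{p^n}$ can be written as $b=c+jp^{n-1}$ with $c\in A_{p^{n-1}}$ and $0\le j<p$, while $b\notin A_{p^n}$. Fixing a representation $c_1m_1^k+\cdots+c_tm_t^k\equiv c\ (mod\ p^{n-1})$, the whole argument turns on a dichotomy: either some $m_i$ is a unit modulo $p$, or every $m_i$ is divisible by $p$.

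First I would establish the inclusion for $2s+2\le n\le k+1$. Here $n-1\ge 2s+1$, so if some $m_i$ were not divisible by $p$, then Lemma \ref{lma+jp^n} (applied with its exponent equal to $n-1$) would give $c+jp^{n-1}\in A_{p^n}$ for every $0\le j<p$, contradicting $b\notin A_{p^n}$. Hence all $m_i\equiv 0\ (mod\ p)$, so each $c_im_i^k$ is divisible by $p^k$; since $n-1\le k$, the sum is $\equiv 0\ (mod\ p^{n-1})$, forcing $c\equiv 0$ and thus $c=0$ because $c\in I_{p^{n-1}}$. Therefore $b=jp^{n-1}$, and $j\ne 0$ since $0\in A_{p^n}$ while $b\notin A_{p^n}$; this is exactly $b\in\{jp^{n-1}:0<j<p\}$.

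For the statements about $N_{p^{k+1}}$ I would specialize to $n=k+1$ to obtain $N_{p^{k+1}}\subseteq\{jp^k:0<j<p\}$ and then sharpen it. If $j\in A_p$, choose $u_1,\ldots,u_t$ with $\sum c_iu_i^k\equiv j\ (mod\ p)$; multiplying by $p^k$ gives $\sum c_i(pu_i)^k=p^k\sum c_iu_i^k\equiv jp^k\ (mod\ p^{k+1})$, so $jp^k\in A_{p^{k+1}}$ and hence $jp^k\notin N_{p^{k+1}}$. This discards the values $j\in A_p$ and yields the first displayed inclusion. For the equality under the hypothesis that some exponent $e$ of $p$ divides $k$, I would prove the reverse inclusion: given $0<j<p$ with $j\notin A_p$, the residue $jp^k$ lies in $A_{p^{k+1}}(p^k)$ because $jp^k\equiv 0\ (mod\ p^k)$ and $0\in A_{p^k}$. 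If it were also in $A_{p^{k+1}}$, a representation $\sum c_im_i^k\equiv jp^k\ (mod\ p^{k+1})$ would be divisible by $p^k$; since $e\mid k$, the integer $k$ is itself an exponent of $p$ (a positive multiple of $e$), so $(\sum c_im_i^k)/p^k=\sum c_iq_i^k$ for some integers $q_i$, and reducing modulo $p$ gives $\sum c_iq_i^k\equiv j\ (mod\ p)$, i.e. $j\in A_p$, a contradiction. Hence $jp^k\in N_{p^{k+1}}$, and the two inclusions give equality.

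The step I expect to be delicate is invoking Lemma \ref{lma+jp^n} at the top of the range, $n=k+1$: this requires $n-1=k\ge 2s+1$, which is precisely the force of $2s+2\le k+1$. When $k=2s$ (for instance $p=2$ with $k=2$) the lift is unavailable—indeed all odd squares are congruent modulo $8$—and the inclusion $N_{p^{k+1}}\subseteq\{jp^k:0<j<p\}$ can genuinely fail, so the hypothesis $2s+2\le n\le k+1$ must really be in play whenever one applies the argument at $n=k+1$. By contrast, the collapse to $c=0$, the refinement to $j\notin A_p$, and the exponent computation giving the reverse inclusion are all comparatively routine once the lifting lemma is available.
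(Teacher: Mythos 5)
Your proof is correct and takes essentially the same route as the paper's: the same dichotomy on whether some $m_i$ is a unit modulo $p$ (with Lemma \ref{lma+jp^n} providing the lift and the divisibility $p^k\mid c_im_i^k$ forcing $c=0$), the same multiplication-by-$p^k$ trick to discard $j\in A_p$, and the same exponent argument for the reverse inclusion; the paper merely phrases the first part contrapositively, showing every nonzero $a\in A_{p^{n-1}}$ lifts. Your closing remark that applying the lemma at $n=k+1$ needs $k\geq 2s+1$ is accurate and consistent with the paper's note that when $p\nmid k$ the inclusion holds for all $2\leq n\leq k+1$.
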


\begin{proof}
We show that $a+jp^{n-1}\in A_{p^{n}}$ for any $a\in A_{p^{n-1}}$ with $a\neq 0$ and $0\leq j<p$. In fact, if $a\in A_{p^{n-1}}$ and $a\neq 0$, then there are integers $m_1,\ldots, m_t$ such that 
\[
c_1m_1^k+c_2m_2^k+\cdots+c_tm_t^k\equiv a\ (mod\ p^{n-1}).
\]
If $p$ divides all the $m_i$, then $p^{n-1}$ divides $a$ since $n-1\leq k$. But $0\leq a<p^{n-1}$ and $a\equiv 0\ (mod\ p^{n-1})$ implies $a=0$, a contradiction. We conclude that some $m_i$ is not divisible by $p$ and by Lemma \ref{lma+jp^n} we have that $a+jp^{n-1}\in A_{p^{n}}$ for any $0\leq j<p$. 

Thus we have that $N_{p^{n}}\subseteq\{jp^{n-1}:0\leq j<p\}$, but since $0\notin N_{p^{n}}$, we have $N_{p^{n}}\subseteq\{jp^{n-1}:0< j<p\}$. 

Moreover, when $n=k+1$, for $0<j<p$, if $j\in A_p$, then $c_1m_1^k+c_2m_2^k+\cdots+c_tm_t^k\equiv j\ (mod\ p)$ for some integers $m_1,\ldots, m_t$; then $c_1(pm_1)^k+c_2(pm_2)^k+\cdots+c_t(pm_t)^k\equiv jp^k\ (mod\ p^{k+1})$, and this shows that $N_{p^{k+1}}\subseteq\{jp^k:j\notin A_p, 0<j<p\}$. 

Finally, if we have $c_1m_1^k+c_2m_2^k+\cdots+c_tm_t^k\equiv jp^k\ (mod\ p^{k+1})$ for some $m_1, m_2, \ldots, m_t$, and $(c_1m_1^k+c_2m_2^k+\cdots+c_tm_t^k)/p^k=c_1q_1^k+c_2q_2^k+\cdots+c_tq_t^k$ for some $q_1, q_2, \ldots, q_t$, then $c_1q_1^k+c_2q_2^k+\cdots+c_tq_t^k\equiv j\ (mod\ p)$, that shows that $j\in A_p$. 
\end{proof}

If in Proposition \ref{propNsetsn<=k+1} $p$ does not divide $k$, then the inclusion $ N_{p^{n}}\subseteq \{jp^{n-1}:0<j<p\}$ holds for $2\leq n\leq k+1$.

To determine the value $\alpha(p^n)$ for all $n\geq 1$ (if the conditions of Proposition \ref{propNsets} hold), our strategy is composed by the following steps
\begin{enumerate}
    \item Determine $\alpha(p)=|A_p|$. This implies that we have to determine $A_p$.
    \item Determine the sets $N_{p^r}$ for $r=2,\ldots, k+1$. Then $n_r=|N_{p^r}|$ for $r=2, \ldots, k+1$
    \item We apply (\ref{eqrecurrencealphan_r}) to obtain a recurrence formula for $\alpha(p^n)$.
    \item We can find an explicit formula for $\alpha(p^n)$ from this recurrence formula, using Corollary \ref{corexplicit}, or by any other means.  
\end{enumerate}

\subsection{An example: The polynomial $x^k$}

We illustrate our ideas by considering the multiplicative function $\alpha$ of the polynomial $f(x)=x^k$ where $k\geq 1$ is a given integer. For simplicity we assume that $p$ is any prime that does not divide $k$.

The steps we follow are
\begin{enumerate}
    \item Determine $A_p$ and $\alpha(p)$. 
    \item Determine $N_{p^2}, \ldots, N_{p^{k+1}}$ and the numbers $n_2, \ldots, n_{k+1}$.
    \item Determine the recurrence given by (\ref{eqrecurrencealphan_r}).
    \item Give explicit formulas for $\alpha(p^n)$. 
\end{enumerate}

For the first step, we have that $A_p$ is the set of elements $a\in I_p=\{0,1,\ldots, p-1\}$ such that the congruence $x^k\equiv a\ (mod\ p)$ is solvable. If $a\neq 0$, then $a$ is a $k$-th power residue modulo $p$. Therefore, we have that $A_p=\{a\in I_p:a\text{ is a $k$-th power residue modulo $p$}\}\cup \{0\}$. If $d=\gcd (k, p-1)$, then there are $(p-1)/d$ $k$-th power residues modulo $p$ and so 
\begin{equation}\label{eqalpha(p)x^k}
 \alpha(p)=(p-1)/d+1.   
\end{equation}

Now, for the $N$-sets $N_{p^2}, \ldots, N_{p^{k+1}}$ we have the following result.

\begin{lemma}\label{lmN-setsx^k}
For $n=2,\ldots, k,$ 
\[
N_{p^{n}}=\{jp^{n-1}:0<j<p\}.
\]
Moreover 
\[
N_{p^{k+1}}=\{jp^{k}:0<j<p\text{ and }j\notin A_p\}.
\]
\end{lemma}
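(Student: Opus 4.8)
The plan is to treat the two parts separately, obtaining one inclusion for free from Proposition~\ref{propNsetsn<=k+1} in each case, and then proving the reverse inclusion by hand. Since $p$ does not divide $k$, the highest power $p^s$ dividing $k$ has $s=0$, so the hypothesis $2s+2\le n$ of Proposition~\ref{propNsetsn<=k+1} reduces to $n\ge 2$. Hence for every $n$ in the range $2\le n\le k+1$ we already have $N_{p^n}\subseteq\{jp^{n-1}:0<j<p\}$, and for $n=k+1$ we moreover have $N_{p^{k+1}}\subseteq\{jp^k:j\notin A_p,\ 0<j<p\}$. The remaining task is to upgrade these inclusions to equalities.

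For the range $2\le n\le k$, I would show that every $jp^{n-1}$ with $0<j<p$ actually lies in $N_{p^n}=A_{p^n}(p^{n-1})\setminus A_{p^n}$. Membership in the lift set $A_{p^n}(p^{n-1})$ is immediate: since $0\in A_{p^{n-1}}$ and $jp^{n-1}\equiv 0\ (mod\ p^{n-1})$ with $0<jp^{n-1}<p^n$, the element $jp^{n-1}$ belongs to $A_{p^n}(p^{n-1})$. The real content is to check that $jp^{n-1}\notin A_{p^n}$. Suppose to the contrary that $m^k\equiv jp^{n-1}\ (mod\ p^n)$ for some integer $m$. Because the right-hand side is divisible by $p$, so is $m^k$, forcing $p\mid m$; writing $m=p\ell$ gives $m^k=p^k\ell^k$, which is divisible by $p^n$ because $n\le k$. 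Thus $jp^{n-1}\equiv 0\ (mod\ p^n)$, i.e.\ $p\mid j$, contradicting $0<j<p$. This yields the reverse inclusion and hence equality for $2\le n\le k$.

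For $n=k+1$, the reverse inclusion is handed to us by the final assertion of Proposition~\ref{propNsetsn<=k+1}: its hypothesis requires that some exponent of $p$ in the polynomial divide $k$, and by Lemma~\ref{lmexponent}(1) the integer $k$ is itself an exponent of $p$ in $x^k$. Since $k$ trivially divides $k$, the proposition delivers the equality $N_{p^{k+1}}=\{jp^k:j\notin A_p,\ 0<j<p\}$ directly, with no further work.

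The only step requiring a genuine argument is the divisibility computation establishing $jp^{n-1}\notin A_{p^n}$ in the middle range; everything else is a citation of Proposition~\ref{propNsetsn<=k+1} together with Lemma~\ref{lmexponent}. I expect no serious obstacle here, the decisive quantitative input being simply that the exponent on the variable is large enough (namely $n\le k$) that any base divisible by $p$ already pushes $m^k$ to be divisible by the full modulus $p^n$.
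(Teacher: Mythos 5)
Your proof is correct and follows essentially the same route as the paper's: both invoke Proposition~\ref{propNsetsn<=k+1} (with $s=0$) for the inclusions and the $n=k+1$ equality via Lemma~\ref{lmexponent}(1), and both establish the reverse inclusion for $2\le n\le k$ by the same contradiction argument that $p\mid m$ forces $p^n\mid m^k$ and hence $p\mid j$. Your explicit verification that $jp^{n-1}\in A_{p^n}(p^{n-1})$ (via $0\in A_{p^{n-1}}$) is a small point the paper leaves implicit, but otherwise the arguments coincide.
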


\begin{proof}
In Proposition \ref{propNsetsn<=k+1} we have $2s+2=2$. Also we have that $k$ is an exponent of $p$ in $x^k$. Then for $n=2,\ldots, k+1$ we have $N_{p^{n}}\subseteq\{jp^{n-1}:0<j<p\}$ and $N_{p^{k+1}}=\{jp^{k}:0<j<p\text{ and }j\notin A_p\}$. 

To prove that $\{jp^{n-1}:0<j<p\}\subseteq N_{p^{n}}$ when $2\leq n\leq k$, let us take $0<j<p$ and assume $jp^{n-1}\in A_{p^{n}}$. Then $m^k\equiv jp^{n-1}\ (mod\ p^n)$ for some integer $m$. So $p$ divides $m$ and since $k\geq n$, we deduce that $p^n$ divides $jp^{n-1}$. Therefore, $p$ divides $j$, which is a contradiction. Hence $jp^{n-1}\in N_{p^n}$.
\end{proof}

For $r=2,\ldots, k+1$, we set $n_r=|N_{p^r}|$. From Lemma \ref{lmN-setsx^k} and (\ref{eqalpha(p)x^k}) it follows that 
\begin{equation*}
    n_r=\begin{cases}
        p-1, &\text{for }r=2,\ldots, k.\\ 
        (d-1)(p-1)/d, &\text{for }r=k+1.
    \end{cases}
\end{equation*}

By Proposition \ref{propNsets} we get our recurrence formula, and it is not difficult to deduce the formulas in the following proposition.

\begin{proposition}\label{propalphax^k}
Let $p$ be a prime, $n, k\geq 1$ and $d=\gcd (k, p-1)$. If $\alpha$ is the multiplicative function associated to the polynomial $x^k$, then we have the following recurrence formula 
\[
\alpha(p^n)=\begin{cases}
p\alpha(p^{n-1})-(d-1)(p-1)/d, & \text{if $n\equiv 1\ (mod\ k)$,}\\
p\alpha(p^{n-1})-p+1, & \text{if $n\not\equiv 1\ (mod\ k)$.}
\end{cases}
\]
If $n\equiv r\ (mod\ k)$ where $1\leq r\leq k$, then 
\begin{equation}
   \displaystyle \alpha(p^n)=\frac{p^{n+k-1}-p^{r-1}}{d\cdot \left(\frac{p^k-1}{p-1}\right)}+1.
\end{equation}
\end{proposition}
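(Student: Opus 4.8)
The plan is to separate the two assertions: first read off the recurrence from the structural results, then solve it. For the recurrence, I would feed the values of $n_r$ just computed into the relation $\alpha(p^n)=p\alpha(p^{n-1})-n_r$ of Proposition \ref{propNsets}. The only point to watch is the indexing convention: there $r$ runs over $2\le r\le k+1$, so the residue class $n\equiv 1\ (mod\ k)$ with $n>1$ is the one carrying $r=k+1$, for which $n_{k+1}=(d-1)(p-1)/d$, whereas each class $n\equiv r\ (mod\ k)$ with $2\le r\le k$ carries $n_r=p-1$. Substituting these two values yields precisely the two branches of the stated recurrence.

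For the explicit formula I would solve this first-order linear recurrence directly. Denoting by $c_j$ the amount subtracted at step $j$, dividing the relation $\alpha(p^j)=p\,\alpha(p^{j-1})-c_j$ by $p^j$ makes the left-hand side telescope; summing from $j=2$ to $n$ and multiplying back by $p^n$ gives $\alpha(p^n)=p^{n-1}\alpha(p)-\sum_{j=2}^{n}c_jp^{n-j}$, where $\alpha(p)=(p-1)/d+1$ by (\ref{eqalpha(p)x^k}). This is exactly the computation behind Corollary \ref{corexplicit}, so one could instead quote that corollary with the present $n_r$ inserted; I find the direct summation cleaner here because the two branches of $c_j$ combine neatly.

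The heart of the argument, and the only place where genuine bookkeeping is needed, is the evaluation of $\sum_{j=2}^{n}c_jp^{n-j}$. I would write $c_j=(p-1)-\tfrac{p-1}{d}$ on the indices $j\equiv 1\ (mod\ k)$ and $c_j=p-1$ otherwise, so the sum splits as a full geometric series $(p-1)\sum_{j=2}^{n}p^{n-j}=p^{n-1}-1$ minus a correction supported on $\{j:2\le j\le n,\ j\equiv 1\ (mod\ k)\}$. Writing $n=mk+r$ with $1\le r\le k$, these indices are exactly $j=ik+1$ for $1\le i\le m$, with $n-j=(m-i)k+(r-1)$, so the correction equals $\tfrac{p-1}{d}\,p^{r-1}\frac{p^{mk}-1}{p^{k}-1}=\tfrac{p-1}{d}\cdot\frac{p^{n-1}-p^{r-1}}{p^{k}-1}$. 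Assembling the pieces and clearing the denominator $d(p^k-1)$, the identity $p^{n-1}(p^k-1)+p^{n-1}=p^{n+k-1}$ collapses everything to $\frac{p^{n+k-1}-p^{r-1}}{d\,(p^k-1)/(p-1)}+1$, the asserted formula.

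As a final consistency check I would verify the closed form at $n=1$, where $r=1$ forces $p^{r-1}=1$ and the fraction reduces to $(p-1)/d$, recovering the base value $\alpha(p)$. Alternatively, the whole summation can be avoided by proving the closed form by induction on $n$: substitute the formula for $\alpha(p^{n-1})$ into each branch of the recurrence and check that the $n$-th value matches. The one subtle point is that when $n\equiv 1\ (mod\ k)$ the predecessor $n-1$ has residue $k$ (not $0$) in the chosen range $1\le r\le k$, so one must track how the factor $p$ and the exact subtraction $(d-1)(p-1)/d$ conspire to turn the predecessor's term $p^{k-1}$ into the constant $1$ demanded by $r=1$.
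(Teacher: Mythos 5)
Your proposal is correct and follows exactly the route the paper indicates: the recurrence comes from Proposition \ref{propNsets} with the values $n_r=p-1$ for $2\le r\le k$ and $n_{k+1}=(d-1)(p-1)/d$ (correctly matching the class $n\equiv 1\ (mod\ k)$ to $r=k+1$), and the closed form is obtained by the summation underlying Corollary \ref{corexplicit}. The paper leaves this deduction as ``not difficult''; your telescoping computation, including the evaluation of the correction term $\frac{p-1}{d}\cdot\frac{p^{n-1}-p^{r-1}}{p^k-1}$ and the check at $n=1$, supplies the omitted details accurately.
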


\section{Polynomials that involve sums and differences of squares.}

In this section we apply our ideas to the polynomials $x^2+y^2, x^2+y^2+z^2$ and $x^2-y^2$. In each case, we determine explicit formulas for $\alpha(p^n)$. We also show how to determine explicitly the sets $A_{p^n}$ and answer the question about determining all $n$ such that the given polynomial is surjective on $n$.

\subsection{The polynomial $x^2+y^2$.}

We consider the polynomial $f(x,y)=x^2+y^2$ and its associated multiplicative function $\alpha$. We obtain, using our methods, the results about the size of the sets $A_{p^n}$ already found in \cite{burns, harrington}.

\begin{lemma}\label{lmalpha(p)=p}
For any prime number $p$, we have $\alpha(p)=p$.
\end{lemma}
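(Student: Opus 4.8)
The plan is to prove that for every prime $p$ the map $(x,y)\mapsto x^2+y^2$ is surjective onto $\mathbb{Z}_p$, so that $A_p=I_p$ and hence $\alpha(p)=|I_p|=p$. I would first dispose of the case $p=2$ directly: the values $0=0^2+0^2$ and $1=1^2+0^2$ already exhaust $I_2$, so $\alpha(2)=2$. The substance of the argument is therefore the case $p$ odd, where I would use a pigeonhole count rather than appeal to the global Theorem \ref{teosumsquares}.

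For odd $p$, fix an arbitrary $a\in I_p$ and introduce the two subsets of $\mathbb{Z}_p$
\[
S=\{\,x^2 \bmod p : x\in\mathbb{Z}_p\,\},\qquad T=\{\,a-y^2 \bmod p : y\in\mathbb{Z}_p\,\}.
\]
The key input is that for odd $p$ the squaring map on $\mathbb{Z}_p^\times$ is two-to-one, so there are exactly $(p-1)/2$ nonzero squares; including $0$ gives $|S|=(p+1)/2$. Since $T$ is the image of $S$ under the bijection $u\mapsto a-u$ of $\mathbb{Z}_p$, we also have $|T|=(p+1)/2$. Hence $|S|+|T|=p+1>p=|\mathbb{Z}_p|$, so $S$ and $T$ cannot be disjoint. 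Any element common to both yields integers $x,y$ with $x^2\equiv a-y^2\ (mod\ p)$, that is, $x^2+y^2\equiv a\ (mod\ p)$. As $a$ was arbitrary, every residue is a sum of two squares modulo $p$, so $A_p=I_p$ and $\alpha(p)=p$.

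There is essentially no hard step here; the only points requiring care are the exact count of squares (remembering to include $0$, which is what pushes the total up to $(p+1)/2$ for each set) and the verification of the strict inequality $|S|+|T|>p$ that legitimizes the pigeonhole conclusion. For completeness I would remark that this is consistent with the classical structure of the binary form $x^2+y^2$ over a finite field: when $p\equiv 1\ (mod\ 4)$ it is isotropic, and when $p\equiv 3\ (mod\ 4)$ it is an anisotropic nondegenerate form, and in either case a nondegenerate binary quadratic form over $\mathbb{F}_p$ represents every element of $\mathbb{F}_p$, which gives an alternative route to the same conclusion.
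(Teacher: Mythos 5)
Your proof is correct and follows essentially the same route as the paper's: a pigeonhole count showing that the set of squares and the set $\{a-y^2\}$, each of size $(p+1)/2$, must intersect in $\mathbb{Z}_p$. If anything, your version is marginally more careful, since you treat $p=2$ separately (where the count $(p+1)/2$ is not even an integer), whereas the paper applies the counting argument to all primes without comment.
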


\begin{proof}
Let us show that every element in $I_p=\{0,1,\ldots,p-1\}$ is expressible as the sum of two squares modulo $p$. It is known that there are $(p+1)/2$ elements in $I_p$ that are squares modulo $p$. Then for a given $a\in I_p$, then there are $(p+1)/2$ elements in $I_p$ that are expressible as $a-x^2$ modulo $p$. Since $2(p+1)/2=p+1$ and $I_p$ has $p$ elements, thus, there exist $x, y\in I_p$ such that $y^2\equiv a-x^2\ (mod\ p)$, that is $x^2+y^2\equiv a\ (mod\ p)$. 
\end{proof}


We now calculate $\alpha(2^n)$ for all $n\geq 1$. By Lemma \ref{lmexponent}, the prime 2 has exponent 1 in $x^2+y^2$.

It is easily found that 
\[
 A_{2}=\{0,1\},\ A_{4}=\{0,1,2\},\ A_{8}=\{0,1,2,4,5\}.
\]
and 
\[
 A_{4}(2)=\{0,1,2,3\},\ A_{8}(4)=\{0,1,2,4,5,6\}.
\]
Then $N_4=\{3\}$ and $N_8=\{6\}$, that is, $n_2=1$ and $n_3=1$. 
Now, by applying Corollary \ref{corexplicit}, for $n$ odd we have 
\begin{align*}
     \alpha(2^n)&=2^{n-1}\alpha(2)-\frac{2^{n-1}-1}{2^2-1}(2+1)\\
     &=2^n-(2^{n-1}-1)\\
     &=2^{n-1}+1,
\end{align*}
and for $n$ even 
\begin{align*}
     \alpha(2^n)&=2^{n-1}\alpha(2)-\frac{2^{n-1}-2}{2^2-1}(2+1)-1\\
     &=2^n-(2^{n-1}-2)-1\\
     &=2^{n-1}+1.
\end{align*}
So for all $n\geq 1$\begin{equation*}\label{eqalpha2^n}
\alpha(2^n)=2^{n-1}+1.   
\end{equation*}

\begin{remark}
By applying our method we obtain explicit descriptions of the sets $A_{2^n}$ for all $n\geq 1$, as follows. Firs of all we determine $N_{2^n}$ for all $n\geq 2$. Note that $N_{2^2}=\{3\cdot 2^{2-2}\}$ and $N_{2^3}=\{3\cdot 2^{3-2}\}$. For $n>3$, we can write $n=2q+r$ where $r\in \{2,3\}$, then $N_{2^n}=\{2^{2q}a:a\in N_{2^r}\}=\{2^{n-r}a:a\in N_{2^r}\}$. Then it is easy to see that 
\[N_{2^n}=\{3\cdot 2^{n-2}\}=\{2^{n-2}+2^{n-1}\}\]
for all $n\geq 2$.   

Now, we have that 
\begin{align*}
    A_{2^n}&=\{a+a_{n-1}2^{n-1}: a\in A_{2^{n-1}}, a_{n-1}\in\{0,1\}\}\setminus N_{2^n}\\
        &=\{a+a_{n-2}2^{n-2}+a_{n-1}2^{n-1}: a\in A_{2^{n-2}},\\
        & a+a_{n-2}2^{n-2}\in A_{2^{n-2}}, a_{n-2},a_{n-1}\in\{0,1\}\}\setminus N_{2^n}
    \end{align*}
and continuing in this way we find that $A_{2^n}$ is the set of all integers of the form  
\begin{equation}\label{eqbase2}
  a_0+a_1\cdot 2+a_2\cdot 2^2+\cdots+a_{n-1}\cdot 2^{n-1}  
\end{equation}
where
\begin{enumerate}
    \item $a_0, a_1, a_2, \ldots, a_{n-1}\in \{0,1\}$,
    \item $a_0+a_1\cdot 2+a_2\cdot 2^2+\cdots+a_{i-1}\cdot 2^{i-1}\in A_{2^i}$, $i=1,\ldots, n$.
\end{enumerate}
Assume that an element as in (\ref{eqbase2}) is not in $A_{2^n}$. Then there is some $i$, $2\leq i\leq n$, such that $a_0+a_1\cdot 2+a_2\cdot 2^2+\cdots+a_{i-1}\cdot 2^{i-1}\in N_{2^i}$. Since $N_{2^i}=\{2^{i-2}+2^{i-1}\}$, we see that $a_0=\cdots=a_{i-3}=0$ and $a_{i-2}=a_{i-1}=1$. So, 
\[a_0+a_1\cdot 2+a_2\cdot 2^2+\cdots+a_{n-1}\cdot 2^{n-1}=2^{i-2}+2^{i-1}+a_{i}2^i+\cdots+a_{n-1}2^{n-1}.\]

Conversely, elements of the form $2^{i-2}+2^{i-1}+a_{i}2^i+\cdots+a_{n-1}2^{n-1}$, $a_i, \ldots, a_{n-1}\in \{0,1\}$ are not in $A_{2^n}$. Therefore, $A_{2^n}$ is the set of all integers of the form (\ref{eqbase2}) such that the first two nonzero coefficients are not consecutive.

With this, we can also find $\alpha(2^n)$. There are $2^{n-i-2}$ elements of the form $2^{i-2}+2^{i-1}+a_{i}2^i+\cdots+a_{n-1}2^{n-1}$ for $2\leq i\leq n$, so that \[\alpha(2^n)=2^n-\sum_{i=2}^{n}2^{n-i}=2^n-(2^{n-1}-1)=2^{n-1}+1.\]
\end{remark}


Now we compute $\alpha(p^n)$ where $p$ is an odd prime. The highest power of $p$ that divides $2$ is $p^0$, so by Proposition \ref{propNsetsn<=k+1} we have that $N_{p^2}\subseteq\{jp:0<j<p\}$ and $N_{p^3}=\varnothing$ since $A_p=I_p$ by Lemma \ref{lmalpha(p)=p}.

\begin{proposition}\label{propp3mod4}
Let $p$ be a prime such that $p\equiv 3\ (mod\ 4)$ and $n\geq 2$. Then 
$N_{p^2}=\{jp:0<j<p\}$ and $N_{p^3}=\varnothing$. The recurrence for $\alpha(p^n)$ is given by 
\[
\alpha(p^n)=
\begin{cases}
p\alpha(p^{n-1}),&\text{if $n$ is odd},\\
p\alpha(p^{n-1})-p+1, &\text{ if $n$ is even},\\
\end{cases}
\]
and an explicit formula for $\alpha(p^n)$ is 
\[
\alpha(p^n)=\begin{cases}
\frac{p}{p+1}(p^n+1),&\text{if $n$ is odd,}\\
\frac{1}{p+1}(p^{n+1}+1),&\text{ if $n$ is even.}
\end{cases}
\]
\end{proposition}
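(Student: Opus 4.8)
The plan is to follow the four-step strategy laid out after Proposition~\ref{propNsetsn<=k+1}, specialized to $k=2$ and $c_1=c_2=1$. Since $p$ is odd it divides neither coefficient, and since $p\equiv 3\ (mod\ 4)$ Lemma~\ref{lmexponent} gives that $e=2$ is an exponent of $p$ in $x^2+y^2$; as $e=2=k$ divides $k$, both Proposition~\ref{propNsets} and Proposition~\ref{propNsetsn<=k+1} apply. The highest power of $p$ dividing $k=2$ is $p^0$, so $s=0$ and $2s+2=2$.

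First I would pin down the two base $N$-sets. Proposition~\ref{propNsetsn<=k+1} immediately yields $N_{p^2}\subseteq\{jp:0<j<p\}$, and, since $A_p=I_p$ by Lemma~\ref{lmalpha(p)=p}, the same proposition gives $N_{p^3}=\{jp^2:j\notin A_p,\ 0<j<p\}=\varnothing$. The substantive step is the reverse inclusion for $N_{p^2}$. Fix $0<j<p$. Because $0\in A_p$, we have $jp=0+jp\in A_{p^2}(p)$, so it remains to show $jp\notin A_{p^2}$. Suppose instead that $x^2+y^2\equiv jp\ (mod\ p^2)$ for some integers $x,y$. Reducing modulo $p$ gives $x^2+y^2\equiv 0\ (mod\ p)$; since $-1$ is not a quadratic residue modulo a prime $p\equiv 3\ (mod\ 4)$, this forces $p\mid x$ and $p\mid y$. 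Then $p^2\mid x^2+y^2\equiv jp\ (mod\ p^2)$, whence $p\mid j$, contradicting $0<j<p$. Hence $jp\in N_{p^2}$ for every such $j$, so $N_{p^2}=\{jp:0<j<p\}$ and $n_2=|N_{p^2}|=p-1$, $n_3=|N_{p^3}|=0$. This quadratic-residue argument is the only place where the hypothesis $p\equiv 3\ (mod\ 4)$ enters, and it is the true crux of the proposition; everything after it is formal.

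With the $N$-set sizes in hand, Proposition~\ref{propNsets} produces the recurrence. Writing $n=2q+r$ with $r\in\{2,3\}$, an even $n$ corresponds to $r=2$ and an odd $n>1$ to $r=3$, so $\alpha(p^n)=p\alpha(p^{n-1})-n_2=p\alpha(p^{n-1})-(p-1)$ for even $n$ and $\alpha(p^n)=p\alpha(p^{n-1})-n_3=p\alpha(p^{n-1})$ for odd $n$, with base value $\alpha(p)=p$ from Lemma~\ref{lmalpha(p)=p}. This matches the stated recurrence since $-(p-1)=-p+1$.

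Finally, to obtain the closed forms I would either invoke Corollary~\ref{corexplicit} with the data $\alpha(p)=p$, $n_2=p-1$, $n_3=0$, or unroll the recurrence directly. Unrolling gives the alternating sums $\alpha(p^n)=\sum_{i=0}^{n-1}(-1)^i p^{\,n-i}$ for $n$ odd and $\alpha(p^n)=\sum_{i=0}^{n}(-1)^i p^{\,n-i}$ for $n$ even; these are partial sums of a geometric series of ratio $-p$, and applying $\sum_{i=0}^{m}(-1)^i p^{\,m-i}=\frac{p^{\,m+1}-(-1)^{\,m+1}}{p+1}$ (after factoring out one $p$ in the odd case) yields $\alpha(p^n)=\frac{p}{p+1}(p^n+1)$ for odd $n$ and $\alpha(p^n)=\frac{1}{p+1}(p^{n+1}+1)$ for even $n$. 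A two-line induction on $n$ using the two-case recurrence is an equally clean alternative and sidesteps the index bookkeeping of Corollary~\ref{corexplicit}.
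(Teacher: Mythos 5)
Your proposal is correct, and its skeleton is the same as the paper's: establish $N_{p^2}=\{jp:0<j<p\}$ and $N_{p^3}=\varnothing$ via Proposition~\ref{propNsetsn<=k+1} and Lemma~\ref{lmalpha(p)=p}, feed $n_2=p-1$, $n_3=0$ into Proposition~\ref{propNsets}, and unroll. The one genuine divergence is at the crux step, $jp\notin A_{p^2}$. The paper argues globally: from $m_1^2+m_2^2=jp+wp^2$ it invokes Euler's theorem (Theorem~\ref{teosumsquares}) to conclude that $p\equiv 3\ (mod\ 4)$ appears to an even exponent in $m_1^2+m_2^2$, hence $p^2\mid m_1^2+m_2^2$ and $p\mid j$. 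You instead argue locally: reduce $x^2+y^2\equiv jp\ (mod\ p^2)$ modulo $p$, use that $-1$ is a quadratic non-residue for $p\equiv 3\ (mod\ 4)$ to force $p\mid x$ and $p\mid y$ (if, say, $p\nmid y$ then $(xy^{-1})^2\equiv -1\ (mod\ p)$), and conclude $p^2\mid x^2+y^2$ directly. Your route is more elementary and self-contained --- it isolates exactly the fact about $-1$ modulo $p$ that underlies Euler's theorem, rather than citing the full representation theorem --- while the paper's choice keeps the exposition uniform, since Theorem~\ref{teosumsquares} is its standing tool here (it also drives Lemma~\ref{lmexponent}(2),(3) and Proposition~\ref{propp1mod4}). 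You are also slightly more careful than the paper on two small points: you note explicitly that $jp\in A_{p^2}(p)$ because $0\in A_p$ (needed for membership in $N_{p^2}$, which the paper leaves implicit), and you actually carry out the derivation of the closed forms, which the paper dismisses as ``easy to deduce''; your geometric-sum identity $\sum_{i=0}^{m}(-1)^ip^{m-i}=\frac{p^{m+1}-(-1)^{m+1}}{p+1}$ checks out against the recurrence in both parity cases, with the correct base $\alpha(p)=p$.
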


\begin{proof}
It only remains to prove that $\{jp:0<j<p\}\subseteq N_{p^2}$, that is, $jp\notin A_{p^2}$ if $0<j<p$. By contradiction, assume that $jp\in A_{p^2}$. Then there are integers $m_1$, $m_2$ and $w$ such that $m_1^2+m_2^2=jp+wp^2$. This implies that $p$ divides $m_1^2+m_2^2$, and by Theorem \ref{teosumsquares}, $p$ is raised to an even power in the prime decomposition of $m_1^2+m_2^2$. In particular, $p^2$ divides $m_1^2+m_2^2$ and the equation $m_1^2+m_2^2=jp+wp^2$ yields that $p$ divides $j$, a contradiction. Thus $N_{p^2}=\{jp:0<j<p\}$.

We have that $n_2=p-1$ and $n_3=0$. By Proposition \ref{propNsets}, $\alpha(p^n)$ obeys to the recurrence formula 
\[
\alpha(p^n)=
\begin{cases}
p\alpha(p^{n-1}),&\text{if $n$ is odd},\\
p\alpha(p^{n-1})-p+1, &\text{ if $n$ is even}.\\
\end{cases}
\]
(note that $\alpha(p^0)=1$). It is easy to deduce the explicit formula 
\[
\alpha(p^n)=\begin{cases}
\frac{p}{p+1}(p^n+1),&\text{if $n$ is odd,}\\
\frac{1}{p+1}(p^{n+1}+1),&\text{ if $n$ is even.}
\end{cases}
\]
\end{proof}

Let $p$ be a prime number such that $p\equiv 3\ (mod\ 4)$. We can give a description of the set $A_{p^n}$ for $n\geq 1$. By proceeding as in the case of $A_{2^n}$, we can show that $A_{p^n}$ consists of 
all integers of the form  
\begin{equation}\label{eqbasep3mod4}
  a_0+a_1\cdot p+a_2\cdot p^2+\cdots+a_{n-1}\cdot p^{n-1}  
\end{equation}
where
\begin{enumerate}
    \item $a_0, a_1, a_2, \ldots, a_{n-1}\in \{0,1,\ldots, p-1\}$,
    \item $a_0+a_1\cdot p+a_2\cdot p^2+\cdots+a_{i-1}\cdot p^{i-1}\in A_{p^i}$, $i=1,\ldots, n$.
\end{enumerate}
An induction argument using that $N_{p^n}=p^2N_{p^{n-2}}$ for $n>3$, $N_{p^2}=\{jp:0<j<p\}$ and $N_{p^3}=\varnothing$ shows that 

\[N_{p^n}=
\begin{cases}
\varnothing,& \text{if $n>1$ is odd,}\\
\{jp^{n-1}:0<j<p\},& \text{if $n$ is even.}
\end{cases}\]
This yields that an element as in (\ref{eqbasep3mod4}) is in $A_{p^n}$ if and only if its first nonzero term has the form $a_ip^i$ with $i$ even.


\begin{proposition}\label{propp1mod4}
Let $p$ be a prime such that $p\equiv 1\ (mod\ 4)$ and $n$ be a positive integer. Then $N_{p^2}=N_{p^3}=\varnothing$. Moreover, $\alpha(p^n)=p^n$ for all $n\geq 1$.   
\end{proposition}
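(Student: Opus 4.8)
The plan is to reduce everything to computing the two $N$-sets $N_{p^2}$ and $N_{p^3}$, since $k=2$ forces $r\in\{2,3\}$, and then feed the resulting values $n_2=n_3=0$ into the recurrence of Proposition \ref{propNsets}. By Lemma \ref{lmalpha(p)=p} we already know $A_p=I_p$ and $\alpha(p)=p$. By Lemma \ref{lmexponent}(2), since $p\equiv 1\ (mod\ 4)$, the integer $1$ is an exponent of $p$ in $x^2+y^2$, and $1$ divides $k=2$; this is exactly the hypothesis needed to invoke both the equality part of Proposition \ref{propNsetsn<=k+1} and Proposition \ref{propNsets}.

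First I would dispatch $N_{p^3}$. Because $1$ is an exponent of $p$ that divides $k=2$, the equality part of Proposition \ref{propNsetsn<=k+1} gives $N_{p^3}=\{jp^2:j\notin A_p,\ 0<j<p\}$. Since $A_p=I_p$ contains every residue $0<j<p$, there is no admissible $j$, so $N_{p^3}=\varnothing$.

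The heart of the argument is to show $N_{p^2}=\varnothing$. Since $p$ is odd it does not divide $k=2$, so the remark following Proposition \ref{propNsetsn<=k+1} yields the inclusion $N_{p^2}\subseteq\{jp:0<j<p\}$. Hence it suffices to prove that every $jp$ with $0<j<p$ actually lies in $A_{p^2}$, i.e.\ that $jp$ is a sum of two squares modulo $p^2$. Here I would use that $p\equiv 1\ (mod\ 4)$ makes $-1$ a quadratic residue mod $p$: pick $c$ with $c^2\equiv -1\ (mod\ p)$ and write $c^2+1=\ell p$. Setting $m_1=c$ and $m_2=1+tp$, one computes $m_1^2+m_2^2\equiv(\ell+2t)p\ (mod\ p^2)$, and since $2$ is invertible mod $p$ one can solve $\ell+2t\equiv j\ (mod\ p)$ for $t$. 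This exhibits $jp\in A_{p^2}$ for every $0<j<p$, so $N_{p^2}=\varnothing$. I expect this explicit lifting to be the only real obstacle; everything else is formal bookkeeping with the already-established lemmas.

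Finally, with $n_2=|N_{p^2}|=0$ and $n_3=|N_{p^3}|=0$, Proposition \ref{propNsets} gives $\alpha(p^n)=p\,\alpha(p^{n-1})$ for every $n>1$. Starting from $\alpha(p)=p$, a one-line induction then yields $\alpha(p^n)=p^n$ for all $n\geq 1$.
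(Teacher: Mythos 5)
Your proof is correct, and while the overall scaffolding (reduce to $N_{p^2}$ and $N_{p^3}$, then feed $n_2=n_3=0$ into Proposition \ref{propNsets}) matches the paper, your treatment of the central step $N_{p^2}=\varnothing$ takes a genuinely different route. The paper produces, for each $0<j<p$, an honest global representation: starting from $w_1^2+w_2^2=j+wp$ (possible since $A_p=I_p$), it invokes Euler's theorem (Theorem \ref{teosumsquares}) to write $p(w_1^2+w_2^2)$ --- whose prime factors congruent to $3\ (mod\ 4)$ occur to even exponents, as $p\equiv 1\ (mod\ 4)$ --- as $m_1^2+m_2^2$, whence $m_1^2+m_2^2=jp+wp^2$. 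You instead argue purely locally: take $c$ with $c^2\equiv -1\ (mod\ p)$, so $c^2+1=\ell p$, and perturb to $m_1=c$, $m_2=1+tp$, solving $\ell+2t\equiv j\ (mod\ p)$; the computation $m_1^2+m_2^2\equiv(\ell+2t)p\ (mod\ p^2)$ is correct. Your argument is more elementary in that it bypasses the two-squares theorem entirely, using only that $-1$ is a quadratic residue mod $p$; the paper's is shorter given that Theorem \ref{teosumsquares} is already on hand and is reused symmetrically in the $p\equiv 3\ (mod\ 4)$ case (Proposition \ref{propp3mod4}). Note also that your lifting is essentially a hand-rolled instance of the paper's own Lemma \ref{lma+jp^n}: applying that lemma with $a=0\in A_p$, $n=1\geq 2s+1=1$, and the representation $c^2+1^2\equiv 0\ (mod\ p)$ (in which $m_2=1$ is prime to $p$) yields $jp\in A_{p^2}$ for all $0\leq j<p$ immediately, so you could have cited it and skipped the explicit computation. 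The remaining steps --- the equality part of Proposition \ref{propNsetsn<=k+1} for $N_{p^3}$ (the inclusion alone already suffices, as the paper uses), the inclusion $N_{p^2}\subseteq\{jp:0<j<p\}$ since $p\nmid 2$, and the final induction --- are all properly justified.
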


\begin{proof}
We know that $N_{p^3}=\varnothing$. To prove that $N_{p^2}=\varnothing$, it remains to prove that $jp\in A_{p^2}$ if $0<j<q$. In fact, there are integers $w_1, w_2$ and $w$ such that $w_1^2+w_2^2=j+wp$. Since $p\equiv 1\ (mod\ 4)$, by Theorem \ref{teosumsquares}, the product $p(w_1^2+w_2^2)$ is a sum of two squares, say $p(w_1^2+w_2^2)=m_1^2+m_2^2$. Therefore, $m_1^2+m_2^2=p(w_1^2+w_2^2)=p(j+wp)=jp+wp^2$. Thus, we have $A_{p^2}=I_{p^2}$ and therefore $N_{p^2}=\varnothing$.

By Proposition \ref{propNsets}, the following recurrence formula follows
\[
\alpha(p^n)=
\begin{cases}
p,& \text{if $n=1$,}\\
p\alpha(p^{n-1}), &\text{ if $n>1$,}
\end{cases}
\]
which implies that $\alpha(p^n)=p^n$ for all $n\geq 1$. 
\end{proof}

\subsection{The polynomial $x^2+y^2+z^2$}

In this section we consider the polynomial $f(x,y,z)=x^2+y^2+z^2$ and its associated function $\alpha$.

By Lemma \ref{lmexponent} we have that 2 is an exponent of the prime 2 in $x^2+y^2+z^2$.

By direct computations we obtain that $A_{2}=\{0,1\}$, $A_{4}=\{0,1,2,3\}$, $A_{8}=\{0,1,2,3,4,5,6\}$, and we see that $N_4=\varnothing$ and $N_8=\{7\}$. From Proposition \ref{propNsets} it follows that 
\[
\alpha(2^n)=\begin{cases}
2, &\text{if $n=1$,}\\
2\alpha(2^{n-1}), &\text{if $n$ is even,}\\
2\alpha(2^{n-1})-1, &\text{if $n>$ is odd.}\\ 
\end{cases}
\]
The corresponding explicit formula is 
\[\alpha(2^n)=
\begin{cases}
\frac{1}{3}(5\cdot 2^{n-1}+1), &\text{if $n$ is odd,}\\
\frac{2}{3}(5\cdot 2^{n-2}+1), &\text{if $n$ is even.}
\end{cases}
\]

We now describe explicitly the sets $A_{2^n}$. Is is not difficult to show that 
\[N_{2^n}=
\begin{cases}
\varnothing,& \text{if $n$ is even,}\\
\{7\cdot 2^{n-3}\},& \text{if $n\geq 2$ is odd.}
\end{cases}\]
For $n\geq 2$ odd we have that $N_{2^n}=\{2^{n-3}+2^{n-2}+2^{n-1}\}$. This yields that $A_{2^n}$ consists of all integers of the form $a_0+a_12+\cdots+a_{n-1}2^{n-1}$ that are not of the form $2^i+2^{i+1}+2^{i+2}+a_{i+3}2^{i+3}+\cdots+a_{n-1}2^{n-1}$ for some odd $i$ with $0\leq i\leq n-3$.

\vspace{.3cm}

Now, we consider the case where $p$ is an odd prime. In this case we cannot apply Proposition \ref{propNsets} because there is no exponent of $p$ in $x^2+y^2+z^2$, so we treat this case in a slightly different way using Lemma \ref{lmN-sets}. In order to do this, we take into account that odd primes are divided into 4 families depending on their residue modulo 8. The multiplication table of $\{1,3,5,7\}$ modulo 8 is the following:

\begin{center}
\begin{tabular}{c|cccc}\label{table}
&1&3&5&7\\
\hline
1&1&3&5&7\\
3&3&1&7&5\\
5&5&7&1&3\\
7&7&5&3&1
\end{tabular}
\end{center} 
Recall that by Theorem \ref{teosumthree}, a nonnegative integer is representable as the sum of three squares if and only if it is not of the form $4^a(8b+7)$. From the table we deduce the following facts:
\begin{enumerate}
\item Dividing a number that is not of the form $4^a(8b+7)$ by a prime of the form $8k+1$ gives a number that is not of the form $4^a(8b+7)$. That is, if $p$ is a prime of the form $8k+1$, then 1 is an exponent of $p$ in $x^2+y^2+z^2$. 
\item Dividing a number that is not of the form $4^a(8b+7)$ by the square of a prime of the form $8k+3, 8k+5$ or $8k+7$ gives a number that is not of the form $4^a(8b+7)$. Thus, if $p$ is a prime of the form $8k+3, 8k+5$ or $8k+7$, then 2 is an exponent of $p$ in $x^2+y^2+z^2$.
\end{enumerate}

\begin{lemma}\label{lmpm-cp^2}
If $p$ is an odd prime and $m$ is a sum of three squares, then there exists $c\in \mathbb Z$ such that $pm-cp^2$ is the sum of three squares. 
\end{lemma}

\begin{proof}
If $pm$ is a sum of three squares, then we can take $c=0$. 

Suppose that $pm$ is not the sum of three squares, then one of the following cases holds:
\begin{enumerate}
\item $p$ is of the form $8k+3$ and $m$ is of the form $4^a(8b+5)$,
\item $p$ is of the form $8k+5$ and $m$ is of the form $4^a(8b+3)$,
\item $p$ is of the form $8k+7$ and $m$ is of the form $4^a(8b+1)$.
\end{enumerate}
We will show that in any case, $pm-2p^2$ is not of the form $4^a(8b+7)$. If $a>0$, then $pm-2p^2$ is not divisible by 4, so $pm-2p^2$ is not of the form $4^a(8b+7)$ and it is, therefore, a sum of three squares. 

Assume $a=0$. In case (1) we have $pm-2p^2=(8k+3)(8b+5)-2(8k+3)^2=(8k+3)[8(b-2k-1)+7]$ which is a number of the form $8k+5$ and thus is the sum of three squares. In case (2), $pm-2p^2=(8k+5)(8b+3)-2(8k+5)^2=(8k+5)[8(b-2k-1)+1]$ which is a number of the form $8k+5$ and thus is the sum of three squares. In case (3), $pm-2p^2=(8k+7)(8b+7)-2(8k+1)^2=(8k+7)[8(b-2k)+5]$ which is a number of the form $8k+3$ and thus is the sum of three squares.
\end{proof}

\begin{proposition}
Let $p$ be an odd prime number. Then $\alpha(p^n)=p^n$ for all $n\geq 1$. 
\end{proposition}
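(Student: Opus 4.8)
The plan is to prove that $A_{p^n}=I_{p^n}$ for every $n\geq 1$; equivalently, that all the $N$-sets $N_{p^n}$ are empty, so that the recurrence of Lemma \ref{lmrecurrence} collapses to $\alpha(p^n)=p\,\alpha(p^{n-1})$ and yields $\alpha(p^n)=p^n$ by induction. The base case $n=1$ is easy: $x^2+y^2$ is already surjective on $p$ by Lemma \ref{lmalpha(p)=p}, so the same holds \emph{a fortiori} for $x^2+y^2+z^2$ (take the third variable to be $0$). Hence $A_p=I_p$ and $\alpha(p)=p$, and it remains only to show $N_{p^n}=\varnothing$ for every $n\geq 2$.

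Here $k=2$, and since $p$ is odd the highest power of $p$ dividing $k$ is $p^0$, i.e. $s=0$. Proposition \ref{propNsetsn<=k+1} therefore supplies the two base inclusions $N_{p^2}\subseteq\{jp:0<j<p\}$ and $N_{p^3}\subseteq\{jp^2:j\notin A_p,\ 0<j<p\}$. Because $A_p=I_p$, the set indexing $N_{p^3}$ is empty, so $N_{p^3}=\varnothing$ comes for free. The genuine work is to establish $N_{p^2}=\varnothing$, that is, that every $jp$ with $0<j<p$ actually lies in $A_{p^2}$; this is the step I expect to be the main obstacle, and it is precisely what Lemma \ref{lmpm-cp^2} is tailored to handle.

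For that step I would fix $j$ with $0<j<p$. Since $A_p=I_p$, there are integers $a,b,d$ with $a^2+b^2+d^2\equiv j\pmod p$; put $m=a^2+b^2+d^2$, a sum of three squares with $m\equiv j\pmod p$. Lemma \ref{lmpm-cp^2} then furnishes an integer $c$ for which $pm-cp^2$ is again a sum of three squares. As $m\equiv j\pmod p$ gives $pm\equiv jp\pmod{p^2}$, while trivially $pm-cp^2\equiv pm\pmod{p^2}$, we get $jp\equiv pm-cp^2\pmod{p^2}$ with the right-hand side a genuine sum of three squares. Thus $jp\in A_{p^2}$ for every such $j$, and therefore $N_{p^2}=\varnothing$.

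Finally, I would propagate these vanishings upward using Lemma \ref{lmN-sets}, which for $n>k+1=3$ gives $N_{p^n}\subseteq p^2N_{p^{n-2}}$ (applicable here since $p\nmid 1$, the common coefficient). A straightforward strong induction on $n$, with base cases $N_{p^2}=N_{p^3}=\varnothing$ just established, then yields $N_{p^n}=\varnothing$ for all $n\geq 2$. Substituting $|N_{p^n}|=0$ into Lemma \ref{lmrecurrence} gives $\alpha(p^n)=p\,\alpha(p^{n-1})$ for $n\geq 2$, and combined with $\alpha(p)=p$ this gives $\alpha(p^n)=p^n$ for all $n\geq 1$, as claimed.
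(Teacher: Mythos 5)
Your proposal is correct and follows essentially the same path as the paper's own proof: both establish $A_p=I_p$ via Lemma \ref{lmalpha(p)=p}, obtain $N_{p^2}\subseteq\{jp:0<j<p\}$ and $N_{p^3}=\varnothing$ from Proposition \ref{propNsetsn<=k+1}, kill $N_{p^2}$ with exactly the Lemma \ref{lmpm-cp^2} argument you describe, and propagate emptiness upward to conclude $\alpha(p^n)=p\,\alpha(p^{n-1})=p^n$. The only cosmetic difference is that you spell out the strong induction via Lemma \ref{lmN-sets} where the paper simply writes ``consequently, $N_{p^n}=\varnothing$ for all $n\geq 2$.''
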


\begin{proof}
First of all, by Lemma \ref{lmalpha(p)=p} every element in $I_p=\{0,1,\ldots, p-1\}$ is the sum of two squares modulo $p$ and so every element in $I_p$ is the sum of three squares. This means that $A_p=\{0,1,\ldots, p-1\}$ and $\alpha(p)=p$. 

By Proposition \ref{propNsetsn<=k+1} we have that $N_{p^2}\subseteq\{jp:0<j<p\}$ and $N_{p^3}=\varnothing$.

We show that $jp\in A_{p^2}$ for all $0<j<p$. In fact, since $j\in A_p$,  there are integers $w_1, w_2, w_3$ and $w_4$ such that $w_1^2+w_2^2+w_3^2=j+w_4p$. By Lemma \ref{lmpm-cp^2}, there exists $c\in\mathbb Z$ such that $p(w_1^2+w_2^2+w_3^2)-cp^2=u_1^2+u_2^2+u_3^2$ for some integers $u_1, u_2$ and $u_3$. Hence $u_1^2+u_2^2+u_3^2=p(w_1^2+w_2^2+w_3^2)-cp^2=pj+w_4p^2-cp^2=jp+(w_4-c)p^2$, and this shows that $jp\in A_{p^2}$. Thus $N_{p^2}=\varnothing$ and consequently, $N_{p^n}=\varnothing$ for all $n\geq 2$. 

Hence, $\alpha(p^n)=p^n$ for all $n\geq 1$. 
\end{proof}

Having found the value of $\alpha$ on prime powers, we can now determine all integers $n$ such that $x^2+y^2+z^2$ is surjective on $n$. If we write $n=2^sm$ where $m$ is odd, then we have that $\alpha(n)=\alpha(2^s)\alpha(m)=\alpha(2^s)m$. Thus, $\alpha(n)=n$ if and only if $\alpha(2^s)=2^s$, and this last equality holds if and only if $s\leq 2$. Thus, $x^2+y^2+z^2$ is surjective on $n$ if and only if $n\not\equiv 0\ (mod\ 8)$.

\subsection{The polynomial $x^2-y^2$}

We make the computations of $\alpha(p^n)$ for the multiplicative function associated to the polynomial $x^2-y^2$.

We will use the following result \cite[Theorem 13.4]{burton}

\begin{theorem}\label{teoburton}
A positive integer $n$ can be represented as the difference of two squares if and only if $n$ is not of the form $4k+2$.
\end{theorem}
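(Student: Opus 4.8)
The plan is to prove both implications by elementary factorization and parity considerations, with explicit constructions supplying the sufficiency direction. First I would treat \emph{necessity}: assume $n=x^2-y^2$ for some integers $x,y$ and factor as $n=(x-y)(x+y)$. The crucial remark is that the two factors $x-y$ and $x+y$ always have the same parity, since their sum $(x-y)+(x+y)=2x$ is even, so they cannot differ in parity. I would then split into two cases. If both factors are even, their product $n$ is divisible by $4$; if both are odd, their product $n$ is odd. In either case $n\not\equiv 2\ (mod\ 4)$, which is exactly what necessity requires.

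Next I would establish \emph{sufficiency} by exhibiting a representation whenever $n\not\equiv 2\ (mod\ 4)$. Such an $n$ is either odd or a multiple of $4$, and I would handle these separately. If $n$ is odd, write $n=2m+1$; then a direct expansion gives $n=(m+1)^2-m^2$. If $n=4m$ is a multiple of $4$, then $n=(m+1)^2-(m-1)^2$, again verified by expanding the right-hand side. These two identities cover all admissible $n$ and complete the proof.

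The argument uses nothing beyond the identity $a^2-b^2=(a-b)(a+b)$ and a short parity count, so I do not expect any real obstacle. The one place that demands a moment of care is the parity claim in the necessity direction: one must observe that $x-y$ and $x+y$ cannot have opposite parities, which is what rules out the residue class $2\ (mod\ 4)$. Since this follows at once from the evenness of their sum, the theorem is entirely self-contained and the remaining steps are routine verifications.
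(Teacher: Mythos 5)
Your proof is correct and complete: the factorization $n=(x-y)(x+y)$ with the same-parity observation gives necessity, and the identities $2m+1=(m+1)^2-m^2$ and $4m=(m+1)^2-(m-1)^2$ give sufficiency (note the latter uses $y=m-1\ge 0$, and allows $y=0$ when $n=4$, consistent with the paper's convention that zero is an admissible square). There is nothing in the paper to compare against: the authors state this result as \cite[Theorem 13.4]{burton} without proof, and your argument is precisely the standard textbook proof of that theorem, so it supplies exactly what the citation leaves implicit.
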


By Theorem \ref{teoburton} each element $a\in I_n$ that is not of the form $4k+2$ is in $A_n$. So the only elements in $I_n$ that posibly do not belong to $A_n$ are those that have not the form $4k+2$. It is easy to see that $A_2=\{0,1\}$ so $\alpha(2)=2$.

\begin{proposition}\label{propA_2^nx^2-y^2}
For any integer $n\geq 2$, $A_{2^n}$ is the set of all elements in $I_{2^n}$ that are not of the form $4k+2$. Moreover, for each $n\geq 2$
\begin{equation}
    \alpha(2^n)=3\cdot 2^{n-2}.
\end{equation}
\end{proposition}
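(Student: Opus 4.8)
The plan is to pin down $A_{2^n}$ exactly and then count. The forward inclusion is already in hand: by Theorem \ref{teoburton}, every $a\in I_{2^n}$ that is not of the form $4k+2$ is a difference of two integer squares, and reducing such an identity modulo $2^n$ shows that $a\in A_{2^n}$. Consequently the whole proposition reduces to proving that no residue of the form $4k+2$ belongs to $A_{2^n}$, together with a short counting argument.

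For the exclusion I would work modulo $4$. Since $n\geq 2$ we have $4\mid 2^n$, so any solution of $x^2-y^2\equiv a\ (mod\ 2^n)$ reduces to $x^2-y^2\equiv a\ (mod\ 4)$. A square is congruent to $0$ or $1$ modulo $4$, so the possible values of $x^2-y^2$ modulo $4$ are $0-0$, $1-0$, $0-1$, $1-1$, that is, the set $\{0,1,3\}$; in particular $x^2-y^2$ is never $\equiv 2\ (mod\ 4)$. Therefore if $a\equiv 2\ (mod\ 4)$, i.e.\ $a$ is of the form $4k+2$, the congruence has no solution and $a\notin A_{2^n}$. Combined with the forward inclusion, this yields the claimed description $A_{2^n}=\{a\in I_{2^n}: a\not\equiv 2\ (mod\ 4)\}$.

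Finally, to obtain $\alpha(2^n)$ I would count the excluded residues: the integers in $\{0,1,\ldots,2^n-1\}$ that are congruent to $2$ modulo $4$ are exactly $2,6,10,\ldots,2^n-2$, and there are $2^{n-2}$ of them. Hence
\[
\alpha(2^n)=2^n-2^{n-2}=3\cdot 2^{n-2}.
\]
I do not expect a genuine obstacle here; the only points needing a little care are that Theorem \ref{teoburton} is phrased for positive integers, so the residue $a=0$ should be noted separately (it lies in $A_{2^n}$ since $0=0^2-0^2$), and that the hypothesis $n\geq 2$ is exactly what guarantees $4\mid 2^n$, making the reduction modulo $4$ legitimate.
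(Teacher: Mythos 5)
Your proposal is correct and follows essentially the same route as the paper: the forward inclusion via Theorem \ref{teoburton}, the exclusion of residues $\equiv 2\ (mod\ 4)$ by reducing modulo $4$ (the paper phrases this as a parity argument---$m_1,m_2$ both even or both odd forces $4\mid m_1^2-m_2^2$, which is exactly your observation that squares are $0$ or $1$ modulo $4$), and the identical count $\alpha(2^n)=2^n-2^{n-2}=3\cdot 2^{n-2}$. Your remark that $a=0$ needs the separate witness $0=0^2-0^2$, since Theorem \ref{teoburton} is stated for positive integers, is a small point of care the paper passes over silently.
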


\begin{proof}
Let $n\geq 2$. By Theorem \ref{teoburton} it only remains to show that no element of the form $4k+2$ is in $A_{2^n}$. 

Suppose on the contrary that $4k+2\in A_{2^n}$ for some $k$. Then there are integers $m_1, m_2, w$ such that $m_1^2-m_2^2=4k+2+w2^n$. It follows that $m_1^2-m_2^2$ is even, then both $m_1$ and $m_2$ are even or both are odd. In any case it follows that $m_1^2-m_2^2$ is divisible by 4. This yields that $4$ divides $2$, which is absurd. 

Now we are going to determine the size of $A_{2^n}$. The elements in $I_{2^n}$ of the form $4k+2$ are $2, 6,\ldots, 2^n-2$, that is, there are $2^{n-2}$ elements in $I_{2^n}$ of the form $4k+2$. Thus, $\alpha(2^n)=2^n-2^{n-2}=3\cdot 2^{n-2}$. 
\end{proof}

\begin{lemma}\label{lmexppx^2-y^2}
If $p$ is an odd prime, then $p$ has exponent 1 in $x^2-y^2$.
\end{lemma}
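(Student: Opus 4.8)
The goal is to prove that for an odd prime $p$, the number $1$ is an exponent of $p$ in $x^2-y^2$; that is, whenever $p$ divides an integer of the form $m_1^2-m_2^2$, the quotient $(m_1^2-m_2^2)/p$ is again a difference of two squares. By Theorem \ref{teoburton}, being a difference of two squares is equivalent to not having the form $4k+2$, so the whole statement reduces to a clean divisibility criterion that is much easier to handle than the abstract definition of exponent.

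The plan is to start from the hypothesis that $p \mid m_1^2-m_2^2$ and set $M := (m_1^2-m_2^2)/p$, with the aim of showing $M \not\equiv 2 \pmod 4$. First I would dispose of the case where $M \leq 0$ or where the congruence forces triviality, and then split into the arithmetic parity analysis. The cleanest route is to invoke Theorem \ref{teoburton} in the contrapositive: suppose for contradiction that $M$ has the form $4k+2$. Then $m_1^2-m_2^2 = pM = p(4k+2) = 2p(2k+1)$, which is even but exactly divisible by $2$ and not by $4$, since $p$ is odd and $2k+1$ is odd. On the other hand, I would examine the parities of $m_1$ and $m_2$ directly: a difference $m_1^2-m_2^2$ is even only when $m_1,m_2$ have the same parity, and in that case $m_1^2-m_2^2$ is always divisible by $4$ (this is precisely the parity lemma already used in the proof of Proposition \ref{propA_2^nx^2-y^2}). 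This contradicts $m_1^2-m_2^2$ being exactly $2 \pmod 4$, completing the argument.

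Concretely, the key steps in order are: assume $p \mid m_1^2-m_2^2$; suppose toward a contradiction that the quotient $M=(m_1^2-m_2^2)/p$ is of the form $4k+2$; deduce $m_1^2-m_2^2 = 2p(2k+1) \equiv 2 \pmod 4$ because $p$ is odd; then independently observe that any even value of $m_1^2-m_2^2$ must in fact be $\equiv 0 \pmod 4$, by reusing the same parity reasoning as in Proposition \ref{propA_2^nx^2-y^2}; conclude the contradiction. Hence $M$ is not of the form $4k+2$ and, by Theorem \ref{teoburton}, $M$ is a difference of two squares, so $1$ is an exponent of $p$ in $x^2-y^2$.

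I do not expect a genuine obstacle here, since the heavy lifting is delegated to Theorem \ref{teoburton}; the only point requiring mild care is handling signs and the possibility that the quotient is nonpositive, since Theorem \ref{teoburton} is phrased for positive integers. If $M \leq 0$ I would reduce to $|M|$ (noting that $m^2-n^2$ represents a value if and only if it represents its negative by swapping the roles of the two variables, and that $0 = 0^2-0^2$ trivially). The genuinely substantive content is simply the observation that oddness of $p$ prevents the factor of $2$ from being upgraded to a factor of $4$, while the same-parity squares always supply a factor of $4$ — the clash of these two facts is the entire proof.
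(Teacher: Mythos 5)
Your proposal is correct and is essentially the paper's own proof: both argue by contradiction that if the quotient were of the form $4k+2$, then $m_1^2-m_2^2=p(4k+2)\equiv 2\ (mod\ 4)$, which is impossible for a difference of two squares (the paper cites Theorem \ref{teoburton} for this last step, while you re-derive it from the same-parity argument of Proposition \ref{propA_2^nx^2-y^2} --- an immaterial difference). Your explicit treatment of a nonpositive quotient is a small point of extra care that the paper's proof silently skips, but it changes nothing substantive.
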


\begin{proof}
Suppose $p=2r+1$ and $p|(m_1^2-m_2^2)$. If $(m_1^2-m_2^2)/p$ is not a difference of two squares, then $m_1^2-m_2^2=p(4k+2)$ for some $k$, and then $m_1^2-m_2^2=(2r+1)(4k+2)=4(2rk+r+k)+2$, that contradicts Theorem \ref{teoburton}.  
\end{proof}

\begin{proposition}\label{proppx^2-y^2}
If $p$ is an odd prime, then $\alpha(p^n)=p^n$ for all $n\geq 1$.
\end{proposition}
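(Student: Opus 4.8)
The plan is to show that for an odd prime $p$, the polynomial $x^2-y^2$ is surjective on every prime power $p^n$, i.e.\ $A_{p^n}=I_{p^n}$, which immediately gives $\alpha(p^n)=p^n$. The machinery developed earlier makes this a short argument: by Lemma \ref{lmexppx^2-y^2}, the prime $p$ has exponent $1$ in $x^2-y^2$, and since $1$ divides $k=2$, the hypotheses of Proposition \ref{propNsets} are in force. Thus the whole problem reduces to computing the finitely many base $N$-sets $N_{p^r}$ for $2\leq r\leq k+1=3$, namely $N_{p^2}$ and $N_{p^3}$, and feeding the result into the recurrence (\ref{eqrecurrencealphan_r}).

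First I would establish the base case $\alpha(p)=|A_p|=p$. For an odd prime, every residue is a difference of two squares modulo $p$: the map is easily seen to be surjective because $a=x^2-y^2$ can be solved by writing $a=(\tfrac{a+1}{2})^2-(\tfrac{a-1}{2})^2$, using that $2$ is invertible mod $p$. Hence $A_p=I_p$ and $\alpha(p)=p$. Next I would compute the two base $N$-sets. By Proposition \ref{propNsetsn<=k+1}, since $p$ does not divide $k=2$ we have $N_{p^2}\subseteq\{jp:0<j<p\}$, and because $A_p=I_p$ the same proposition forces $N_{p^3}=\varnothing$. The crux is then to show $N_{p^2}=\varnothing$ as well, i.e.\ that every $jp$ with $0<j<p$ lies in $A_{p^2}$.

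The main step is exactly this verification that $jp\in A_{p^2}$, and the exponent language handles it cleanly. Since $j\in A_p$, there are integers with $m_1^2-m_2^2\equiv j\pmod p$, so $p$ divides some integer $N=m_1^2-m_2^2-jp$ of the form $m_1^2-m_2^2$ that is congruent to $0$ mod $p$; more directly, I would start from $j$ itself, write $j$ as a difference of two squares (possible by the base case reasoning), multiply the representation of $j$ by $p$ in the manner of Lemma \ref{lmexppx^2-y^2}, and produce a difference of two squares congruent to $jp$ modulo $p^2$. Concretely, since $j$ is a difference of two squares and $p$ has exponent $1$, one shows $pj$ (or a suitable $pj - cp^2$) is again a difference of two squares, which exhibits $jp\in A_{p^2}$. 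This is the only place where Theorem \ref{teoburton} and the exponent-$1$ property do real work, and it is the step I expect to require the most care in getting the congruence bookkeeping right.

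Once $N_{p^2}=N_{p^3}=\varnothing$ is known, the conclusion is immediate. Applying Proposition \ref{propNsets} with $n_2=n_3=0$ gives the recurrence $\alpha(p^n)=p\,\alpha(p^{n-1})$ for all $n>1$, and since $N_{p^n}=p^kN_{p^{n-k}}$ propagates emptiness, in fact $N_{p^n}=\varnothing$ for all $n\geq 2$. With $\alpha(p)=p$, a trivial induction yields $\alpha(p^n)=p^n$ for all $n\geq 1$, completing the proof.
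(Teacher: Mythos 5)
Your overall architecture matches the paper's: establish $\alpha(p)=p$, use Proposition \ref{propNsetsn<=k+1} to get $N_{p^2}\subseteq\{jp:0<j<p\}$ and $N_{p^3}=\varnothing$, show $N_{p^2}=\varnothing$, propagate emptiness of the $N$-sets, and conclude by the recurrence. Your base case via $a\equiv\left(\frac{a+1}{2}\right)^2-\left(\frac{a-1}{2}\right)^2\ (mod\ p)$ is correct and in fact more explicit than the paper's pigeonhole-style argument (the paper just says it is similar to Lemma \ref{lmalpha(p)=p}). But at the step you yourself flag as the crux --- showing $jp\in A_{p^2}$ for $0<j<p$ --- your sketch has a genuine gap, in two respects. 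First, ``write $j$ as a difference of two squares'' is not available: the base case only represents $j$ modulo $p$, and as an integer $j$ may fail to be a difference of two squares (take $j=2$, which is excluded by Theorem \ref{teoburton}); likewise $pj$ itself is of the forbidden form $4k+2$ whenever $j\equiv 2\ (mod\ 4)$, so the unadjusted product does not work. Second, the exponent-$1$ property of Lemma \ref{lmexppx^2-y^2} cannot carry this step: by definition an exponent lets you \emph{divide} a represented value by $p$, not multiply a representation by $p$, so invoking it here is misdirected (in the general machinery it only enters, via Lemma \ref{lmN-sets2}, to give the reverse inclusion $p^kN_{p^{n-k}}\subseteq N_{p^n}$, which is vacuous once the sets are shown empty).

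The gap is repairable along the lines you gesture at, but the bookkeeping must actually be done. Since $p^2$ is odd, $jp+wp^2$ runs through all residue classes modulo $4$ as $w$ varies, so one can choose $w\geq 0$ with $jp+wp^2\equiv 1\ (mod\ 4)$; Theorem \ref{teoburton} then makes $jp+wp^2$ a difference of two squares, proving $jp\in A_{p^2}$. The paper instead gives a one-line explicit construction: choose $b$ with $4b\equiv j\ (mod\ p)$, possible since $p\nmid 4$, write $4b=j+wp$, and observe $(p+b)^2-(p-b)^2=4bp=jp+wp^2$. Either completion is fine; as written, your proposal asserts the key membership without a valid mechanism, and the appeal to ``exponent $1$'' would not survive being made precise.
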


\begin{proof}

Let $p$ be an odd prime. The proof that $\alpha(p)=p$ is similar to the proof of Lemma \ref{lmalpha(p)=p}.

By Proposition \ref{propNsetsn<=k+1}, we have $N_{p^{2}}\subseteq\{jp:0<j<p\}$ and $N_{p^3}=\varnothing$.

We show that if $0<j<p$, then $jp\in A_{p^2}$. Indeed, since $p$ is odd, $p$ does not divide $4$, and therefore there exists an integer $b$ such that $4b\equiv j\ (mod\ p)$. So $4b=j+wp$ for some $w$. Then 
\begin{equation*}
    (p+b)^2-(p-b)^2=4bp=jp+wp^2,
\end{equation*}
which shows that $jp\in A_{p^2}$. We have shown that for all $a\in I_p=A_p$ and $0\leq j<p$, $a+jp\in A_{p^2}$. Thus $N_{p^2}=\varnothing$. It follows $N_{p^n}=\varnothing$ for all $n\geq 2$ and therefore $\alpha(p^n)=p^n$ for all $n\geq 1$.
\end{proof}

Now we determine all integers $n$ such that $x^2-y^2$ is surjective on $n$. Again, if we write $n=2^sm$ where $m$ is odd, then we have that $\alpha(n)=\alpha(2^s)m$ and therefore, $\alpha(n)=n$ if and only if $\alpha(2^s)=2^s$, which holds if and only if $s\leq 1$. Thus, $x^2-y^2$ is surjective on $n$ if and only if $n\not\equiv 0\ (mod\ 4)$.

\begin{remark}
For the function $\alpha$ associated to a polynomial of the form $\pm x_1^2\pm x_2^2\pm\cdots\pm x_t^2$ with $t\geq 2$, other than $x^2+y^2, x^2-y^2$ and $x^2+y^2+z^2$, we have $\alpha(n)=n$ for all $n$. This is due to the four squares theorem of Lagrange and the fact that every integer can be expressed in the form $x^2+y^2-z^2$. 
\end{remark}

\small

\Addresses
\Addressestwo
\Addressesthree

\end{document}